\newtheorem*{GHthm}{Grove-Halperin Theorem}
\newtheorem*{MAINthm}{Main Theorem}
\newtheorem{thm}{Theorem}
\newtheorem{lem}[thm]{Lemma}
\begin{document}

\title{Rational Ellipticity in Cohomogeneity Two}
\date{\today}
\author{Joseph E. Yeager}
\keywords{Ellipticity, rational homotopy, Riemannian manifold, curvature, Lie group, group action, orbit space, Morse theory}
\address{Howard University}
\email{jyeager@howard.edu}
\thanks{The author would like to express his deep gratitude to his Ph.D. advisor Karsten Grove, both for suggesting this problem, and for the many conversations we had in which he illuminated the technical issues.}

\footnotesize
\begin{abstract}
Let $M$ be a compact, connected and simply-connected Riemannian manifold, and suppose that $G$ is a compact, connected Lie group acting on $M$ by isometries. The dimension of the space of orbits is called the \emph{cohomogeneity} of the action. If the direct sum of the higher homotopy groups of $M$, tensored with the field of rational numbers, is a finite-dimensional vector space over the rationals, then $M$ is said to be \emph{rationally elliptic}. It is known that $M$ is rationally elliptic if it supports an action of cohomogeneity zero or one. When the cohomogeneity is two, this general result is no longer true. However, we prove that $M$ is rationally elliptic in the two-dimensional case under the added assumption that $M$ has nonnegative sectional curvature.
\end{abstract}
\normalsize

\maketitle



\section{Introduction}
Computing the higher homotopy groups $\pi_n (X)$ of a topological space $X$ is usually a difficult task. By tensoring these groups with the field of rationals $\textbf{Q}$, we remove the torsion elements and considerably simplify their computation. A simply-connected manifold $M$ is said to be \textit{rationally elliptic} if
\begin{center}$\sum_{n=2}^{\infty}$ dim$(\pi_n (M) \otimes \textbf{Q} )< \infty$.\end{center}
In the interest of brevity, we shall use the abbreviation elliptic for such manifolds. The ellipticity of $X$ is equivalent to the condition that the rational homology of the loop space is polynomially bounded; i.e., there is a constant $A >0$ such that
\begin{center}
$\sum_{i=0}^{n}$ dim$(H_i(\Omega X ; \textbf{Q}) \le An^r$,
\end{center} 
for $n \ge 1$ and fixed $r$.
F\'{e}lix, Halperin and Thomas present a complete treatment of these ideas in their book \cite{yFsHjcT}. The simplest example of such a space is the sphere $S^n$ for $n \ge 2$. It was shown by Serre \cite{jpS} that compact Lie groups are elliptic. It is a straightforward consequence of the exact homotopy sequence of the fiber bundle $G \rightarrow G/H$ that the same is true for their homogeneous spaces.
  
We shall consider a compact, connected Riemannian manifold $M$ on which a compact, connected  Lie group $G$ acts as a group of isometries. The space of orbits of this $G$-action can be made into a metric space in a natural way and we will denote this space by $M/G$. The dimension of the orbit space is called the \emph{cohomogeneity} of the action. Manifolds having actions of cohomogeneity zero or one are known to be elliptic. In the first case, the manifolds are just the homogeneous spaces mentioned above. The cohomogeneity one case is more difficult and was proved in a paper of Grove and Halperin \cite{kGsH}.

It has been conjectured that every simply connected Riemannian manifold of nonnegative sectional curvature is rationally elliptic~\cite{sCpYkGjW}. In our work we shall prove this conjecture for manifolds supporting a cohomogeneity two action.

\begin{MAINthm}\label{T:MAIN}
Let $M$ be a compact, connected and simply-connected Riemannian manifold having nonnegative sectional curvature. Let $G$ be a compact, connected Lie group acting effectively on $M$ by isometries, with orbit space $M/G$. If $dim(M/G) = 2$, then $M$ is rationally elliptic. 
\end{MAINthm}

The proof utilizes two somewhat different techniques. Most of the orbit spaces considered can be given a metric of constant sectional curvature, either 0 or 1. These generate tilings of either the plane $\textbf{R}^2$ or the sphere $S^2$, respectively. For these cases we shall use the Morse theory and the Serre spectral sequence to prove our theorem. For the remaining cases, which are not amenable to this treatment, the result will follow almost directly from a theorem of K. Grove and S. Halperin in the paper referenced above.

\section{Preliminaries}
We shall assume from here on that our manifold $M$ and Lie group $G$ are compact and connected and that $\pi_1(M) = 0$. \textit{Curvature} always signifies sectional curvature, whenever the noun is used without a modifier.

\subsection{Fibrations, Homotopy Fiber, Loop Spaces}\label{Ss:FIBRATION}
The following basic material is well-known and is taken directly from Hatcher \cite{aH}. The results that we shall need for our work are mostly direct consequences of the concept of the homotopy fiber of a mapping. If $f: E \rightarrow B$ is a mapping of topological spaces, we define the space 
\begin{center}
$E_f = \{(x,\gamma)\  | \  x \in E, \gamma : [0,1] \rightarrow B, \gamma(0) = f(x)\}$
\end{center}
and the mapping $p : E_f \rightarrow B$ by $p(x,\gamma) = \gamma(1)$. Then $p$ is a fibration and there is a homotopy equivalence $E_f \cong E$. If $b \in B$, the homotopy fiber at $b$ is the space 
\begin{center}
$F_b = \{(x,\gamma)\  | \ x \in E$, $\gamma : [0,1] \rightarrow B$, $\gamma (0) = f(x)$, $\gamma (1) = b\}$.
\end{center}
If $p : E \rightarrow B$ is already a fibration, we have the following theorem.

\begin{thm}
If $p : E \rightarrow B$ is a fibration, then the inclusion $E \rightarrow E_p$ is a fiber homotopy equivalence; i.e., the homotopy fibers are homotopy equivalent to the actual fibers.
\end{thm}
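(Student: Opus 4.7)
My plan is to exhibit an explicit fiber-homotopy inverse to the inclusion $i: E \hookrightarrow E_p$ defined by $i(x) = (x, c_{p(x)})$, where $c_{p(x)}$ denotes the constant path at $p(x)$. Observe that with respect to the projection $\pi: E_p \to B$, $\pi(x,\gamma) = \gamma(1)$, the inclusion is a map over $B$: indeed $\pi(i(x)) = c_{p(x)}(1) = p(x)$. So it suffices to construct a continuous retraction $r: E_p \to E$ that is a map over $B$, together with fiber-preserving homotopies $r \circ i \simeq \mathrm{id}_E$ and $i \circ r \simeq \mathrm{id}_{E_p}$.

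The construction of $r$ uses the homotopy lifting property of $p$ applied in a parameterized form. Define $H: E_p \times [0,1] \to B$ by $H((x,\gamma),t) = \gamma(t)$. Then $H$ is continuous, and at $t=0$ it factors as $p$ composed with the continuous lift $\tilde{H}_0: E_p \to E$, $\tilde{H}_0(x,\gamma) = x$, since $\gamma(0) = p(x)$. Because $p$ is a fibration, $H$ admits a lift $\tilde{H}: E_p \times [0,1] \to E$ extending $\tilde{H}_0$. I then set $r(x,\gamma) = \tilde{H}((x,\gamma),1)$. By construction $p(r(x,\gamma)) = H((x,\gamma),1) = \gamma(1) = \pi(x,\gamma)$, so $r$ is a map over $B$.

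It remains to check the two compositions. For $r \circ i$: applied to $i(x) = (x, c_{p(x)})$, the lifted homotopy $\tilde{H}$ is covering the constant path at $p(x)$; using uniqueness of lifts up to vertical homotopy (again via the HLP applied to a homotopy that is constant in $B$), one produces a fiber-preserving homotopy from $r \circ i$ to $\mathrm{id}_E$. For $i \circ r$: here $(i \circ r)(x,\gamma) = (\tilde{H}((x,\gamma),1), c_{\gamma(1)})$, while the identity sends $(x,\gamma)$ to $(x,\gamma)$. A fiber-preserving homotopy is constructed by interpolating the path $\gamma$ in the second coordinate: at parameter $s$, use the reparameterized path $\gamma_s(t) = \gamma(s + (1-s)t)$ as the path coordinate, and in the first coordinate use $\tilde{H}((x,\gamma), s)$. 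This family lies in $E_p$ because $\gamma_s(0) = \gamma(s) = p(\tilde{H}((x,\gamma),s))$ by construction of the lift, and at $s=0$ and $s=1$ it recovers the identity and $i\circ r$ respectively, with endpoint $\gamma_s(1) = \gamma(1)$ constant in $s$, so the homotopy is over $B$.

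The step I expect to be the most delicate is verifying continuity and well-definedness of the homotopies — especially the naturality of the lift $\tilde{H}$ in the point $(x,\gamma)$, which is exactly what the HLP applied simultaneously over all of $E_p$ provides. Once the maps $i$ and $r$ are established as mutually inverse through maps over $B$, the final sentence of the theorem follows at once: restricting $i$ over a point $b \in B$ gives a homotopy equivalence between the actual fiber $p^{-1}(b) \subset E$ and the fiber $\pi^{-1}(b) = F_b$, which is by definition the homotopy fiber of $p$ at $b$.
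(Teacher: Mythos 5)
The paper itself gives no proof of this theorem; it appears in a ``Preliminaries'' subsection that is explicitly attributed to Hatcher's textbook (``The following basic material is well-known and is taken directly from Hatcher~\cite{aH}''), and the statement is quoted as background. So there is nothing in the paper to compare your argument against directly. Your proof is the standard one (essentially what appears in Hatcher), and it is correct: you define $r(x,\gamma)=\tilde H((x,\gamma),1)$ by lifting the tautological homotopy $H((x,\gamma),t)=\gamma(t)$, verify $r$ is a map over $B$, and produce explicit fiber-preserving homotopies for both composites. The $i\circ r\simeq\mathrm{id}_{E_p}$ homotopy $s\mapsto(\tilde H((x,\gamma),s),\,\gamma_s)$ with $\gamma_s(t)=\gamma(s+(1-s)t)$ is exactly right, and you checked the membership condition $\gamma_s(0)=p(\tilde H((x,\gamma),s))$ and the constancy of $\gamma_s(1)$ in $s$.

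One small stylistic point: for $r\circ i\simeq\mathrm{id}_E$ you invoke ``uniqueness of lifts up to vertical homotopy,'' but no extra appeal to the HLP is needed here. The map $G(x,t)=\tilde H((x,c_{p(x)}),t)$ is itself already the required fiber-preserving homotopy: $G(x,0)=x$, $G(x,1)=(r\circ i)(x)$, and $p\circ G(x,t)=c_{p(x)}(t)=p(x)$, so $G$ is a homotopy over $B$ from $\mathrm{id}_E$ to $r\circ i$ with no further lifting argument. This does not affect correctness, it just shortens the verification.
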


Now let  $p : E \rightarrow B$ be a fibration with fiber $F = p^{-1}(b_0)$. Choose a basepoint $e_0 \in F$. The homotopy fiber consists of pairs $(e,\gamma)$ with $e \in E$ and $\gamma$ a path from $p(e)$ to $b_0$. The inclusion $F \rightarrow F_p$ is a homotopy equivalence and extends to a map $i : F_p \rightarrow E$ where $i(e,\gamma) = e$. Clearly, this mapping is a fibration. The fiber $F_i\ =\ i^{-1}(e_0)$ consists of pairs $(e_0,\gamma)$ with $\gamma$ a loop in $B$ at the basepoint $b_0$. This fiber is just $\Omega B$ so there is a homotopy equivalence between $\Omega B$ and $F_i$. It is not difficult to construct a retraction of $F_i$ onto $\Omega B$. The composition
\begin{center}
$\Omega B \rightarrow F_i \rightarrow F_p \rightarrow F$
\end{center} 
where the map $F_p \rightarrow F$ is a homotopy inverse of the inclusion gives a map $\Omega B \rightarrow F$ which is clearly homotopic to a fibration. The following diagram summarizes the situation.

\[
\begin{CD}
@....	@>>>	F_j			@>i>> 			F_i			@>j>>	F_p			@>i>>	E		@>p>>	B\\
@.	@.		@AA\cong A					@AA\cong A			@AA\cong A			@|				@|\\
@....	@>>>	\Omega E		@>\Omega p>>	\Omega B		@>>>	F			@>>>	E		@>p>>	B
\end{CD}
\] \bigskip

This process can be iterated indefinitely showing that there is an infinite sequence of loop spaces
\begin{center}
$...\rightarrow \Omega^k F \rightarrow \Omega^k E \rightarrow \Omega^k B \rightarrow ...  \rightarrow \Omega F \rightarrow \Omega E\rightarrow \Omega B \rightarrow F \rightarrow  E \rightarrow B$
\end{center}
where the consecutive maps are fibrations up to homotopy equivalence.

\subsection{Group Actions and Orbit Spaces}
Let $G \times M \rightarrow M$ be an effective isometric action of a compact, connected Lie group on a compact, connected Riemannian manifold. For $p \in M$, the isotropy group $G_p$ is the subgroup of $G$ fixing the point $p$. The orbit of $p$ is the homogeneous space $Gp = G/G_p$. The set of orbits can be given the structure of an Alexandrov space, denoted by $M/G$. An action is \textit{polar} if there is a complete immersed submanifold $S$ of $M$ which meets all orbits of G orthogonally. $S$ is called a section of the action and is always a totally geodesic submanifold. The action is called \textit{hyperpolar} if, in addition, the section is flat as, for example, in the well-known adjoint action of a Lie group on itself. 

\subsection{Grove-Halperin Theorem}
One of the main tools that we shall employ is a theorem of Grove and Halperin. In \cite{kGsH} they show that a compact simply-connected Riemannian manifold $M$ supporting a cohomogeneity one action of a compact Lie group $G$ is necessarily elliptic. An orbit space of such an action must be either a circle or an interval. The case of the circle is of no importance for us since, in that case, $M$ will not be simply-connected. Denote the endpoints of the interval by $a$ and $b$. Choose some point $e$ in the interior of the interval. Let $A, B, E$ denote the orbits in $M$ corresponding to these points. Since these are all homogeneous spaces of $G$, they are elliptic. To prove their result, they note the fact that there are disk bundles $D(A)$ and $D(B)$ in $M$ over $A$ and $B$, respectively, having $E$ as their common boundary. The idea then is to create a "double mapping cylinder" to represent $M$ as a disjoint union
\begin{center}$A\  \amalg\  (E \times I)\  \amalg \ B$\end{center}
with attaching maps $\phi_0 : E \times \{0\} \rightarrow A$ and $\phi_1 : E \times \{1\} \rightarrow B$, where $I$ is the closed interval [0,1]. By analyzing this cylinder for the various possible cohomogeneity one actions, they prove that $M$ is elliptic.

The proof they give in their paper is carried out in greater generality, and does not actually require that $A$, $B$ and $E$ be homogeneous spaces, but only that they be elliptic. The cohomogeneity one property was used merely to establish the ellipticity of these spaces and to show the existence of the disk bundles. Therefore, if we know that $A$, $B$ and $E$ are elliptic, and if we can establish the existence of the disk bundles $D(A)$ and $D(B)$ in some other way, their basic argument will apply without modification. In view of these remarks, we restate their results in a form more suited to our needs. 

\begin{GHthm}\label{T:GH}
Let $M$ be a compact, connected and simply-connected Riemannian manifold. Suppose that $A$ and $B$ are disjoint submanifolds of $M$ and that $D(A)$ and $D(B)$ are disk bundles in $M$ over $A$ and $B$, respectively, having the submanifold $E$ as their common boundary. Then $M$ is elliptic if and only if  $E$ (or $A$ or $B$) is elliptic.
\end{GHthm}

\subsection{Riemannian Submersions}\label{Ss:Foliation}
The basic facts about Riemannian submersions can be found in O'Neill's paper~\cite{bO}. Let $M$ and $B$ be Riemannian manifolds and let $\pi : M \rightarrow B$ be a map. Define the vertical subspace $V_p = \{v \in T_p\ |\ \pi_*v = 0\}$. The orthogonal complement of $V_p$, denoted by $H_p$, is called the horizontal subspace. The map $\pi$ is called a Riemannian submersion if $\pi$ has maximal rank and if $\pi_*$ preserves the lengths of horizontal vectors. If $X$ is a vector field on $M$, denote by $vX$ and $hX$ the vertical and horizontal components of $X$, respectively. Define a tensor $A_X Y = v\triangledown_{hX}hY + h\triangledown_{hX}vY$ where $\triangledown$ is the Levi-Civita connection. When $X,Y$ are horizontal, it is easy to show that $A_X Y = \frac{1}{2}v[X,Y]$ where $[X,Y]$ is the Lie bracket of $X$ and $Y$. Thus $A_X Y$ is a measure of the non-integrability of the horizontal distribution. Let $X, Y$ denote an orthonormal pair of horizontal vectors at the point $p$, and let $K$ denote the sectional curvature in the plane of $X$ and $Y$. If $K'$ denotes the sectional curvature in $B$ corresponding to $\pi_* X$ and $\pi_* Y$, then $K = K' - 3|A_X Y|^2$ and it follows that $K' \ge K$.

\subsection{Morse Theory}\label{Ss:MORSE}
Let $M$ be a Riemannian manifold and let $P$ and $Q$ be two submanifolds of $M$, where we do not rule out the possibility that $P = Q$, although we shall only be interested in the case where $P$ is a point. The space of all piecewise smooth paths in $M$ from $P$ to $Q$ will be denoted by $\Omega (M;P,Q)$. We define the energy $E$ on $\Omega (M;P,Q)$ by

\begin{center}
$E(\omega) = $ \Large $\int_{0}^{1}\ \vline {\frac{d\omega}{dt}}\vline\ ^2$ \normalsize $dt$.
\end{center}

\noindent As is well known from the calculus of variations, the first variation of $E$ vanishes; i.e., $\lambda$ is a critical point of $E$, precisely when $\lambda$ is a horizontal geodesic.  At a critical point of $E$, the Hessian $E_{**}$ is well-defined. We shall be interested in computing the dimension of its nullspace at $\gamma$, which is always finite.  For this we need to look at a second variation of $E$. We find that the nullspace of $E_{**}$ consists of certain vector fields defined along the critical geodesics. These are the Jacobi fields and they satisfy the differential equation
\begin{center}
\Large $\frac{D^2 J}{dt^2}$ \normalsize $+ R(\gamma^{'}(t),J)\gamma^{'}(t) = 0$,
\end{center} 
where \Large $\frac{D}{dt} = $ \normalsize $\nabla_{\gamma {'}(t)}$ is the covariant derivative along $\gamma$ and $R$ is the Riemann curvature tensor.
Let $Q_q$ denote the tangent space to $Q$ at the point $q \in Q$. Suppose that $\gamma(t)$ is a geodesic defined on [0,T] such that $\gamma(0) \perp Q_q$. A $Q$-Jacobi field is a Jacobi field which is orthogonal to $\gamma$ with $J(0) \in Q_{\gamma(0)}$ and $J'(0) - S_0 J(0) \perp Q_{\gamma(0)}$, where $S_t$ is the second fundamental form of $Q$ at $\gamma(t)$ with respect to $\gamma'(t)$. A $Q$-focal point is a point $\gamma(t)$, $t \in (0,T]$, for which there is a non-zero $Q$-Jacobi field which vanishes at $\gamma(t)$. The multiplicity of the the $Q$-focal point is the dimension of such Jacobi fields. Let $S_T$ be the second fundamental form of $Q$ at $\gamma(T)$. Define a symmetric bilinear map $A$ on the space spanned by these Jacobi fields 
\begin{center}
$A(J_1,J_2)\  = \ <J_1'(T) - S_T J_1(T),J_2(T)>$
\end{center}
whose value at T is contained in $Q_{\gamma(T)}$. We have the following extensions~\cite{dK1},~\cite{dK2} of the usual Index Theorem and Morse Theorem to the cases where the ends are submanifolds.

\begin{thm}\label{T:EXTINDEX}
If $\gamma : [0,T] \rightarrow M$ is a critical geodesic and $\gamma(T)$ is not a focal point of $Q$, then the index of $\gamma$ is the number of $Q$-focal points $\gamma(t)$, with $0 < t < T$, each counted with its multiplicity, plus the index of $A$ at $T$.
\end{thm}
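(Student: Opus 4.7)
The plan is to adapt the classical Morse Index Theorem to the situation where one endpoint of the geodesic is constrained to the submanifold $Q$. I would begin by computing the second variation of the energy $E$ at the critical geodesic $\gamma$, restricted to piecewise smooth variation fields $V$ along $\gamma$ with $V(0)=0$ (since the other endpoint $P$ is a point) and $V(T)\in Q_{\gamma(T)}$. Integration by parts produces the standard interior term together with a boundary contribution $-\langle S_T V(T), V(T)\rangle$ coming from the second fundamental form of $Q$ at $\gamma(T)$. The index of $E_{**}$ is by definition the maximal dimension of a subspace on which the resulting symmetric form is negative definite, and the task is to compute it geometrically in terms of the Jacobi equation.

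Next, I would carry out the standard finite-dimensional reduction. Choose a partition $0=t_0<t_1<\cdots<t_k=T$ fine enough that each restricted segment $\gamma|_{[t_i,t_{i+1}]}$ admits no focal points with respect to the broken boundary data, and consider the finite-dimensional subspace $\mathcal{J}$ of piecewise Jacobi fields along $\gamma$ that are smooth on each subinterval and satisfy the $Q$-boundary condition at $t=0$. A routine argument shows that $E_{**}$ is positive definite on the complementary subspace of vector fields vanishing at each $t_i$, so the index and nullity of $E_{**}$ on the full variation space equal those of its restriction to $\mathcal{J}$.

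The central step is a monotonicity argument in the terminal parameter. Letting the endpoint move from $0$ to $T$, the restricted form on the corresponding space $\mathcal{J}_t$ is locally constant in $t$ except that at each interior $Q$-focal point $\gamma(t^{*})$ its index jumps by exactly the multiplicity of $t^{*}$, since a new Jacobi direction becomes available on which the form transitions to negative. This is essentially Morse's original jump argument, modified only by imposing the $Q$-boundary condition at $t=0$ via the definition of a $Q$-Jacobi field. When $\gamma(T)$ is not a focal point, evaluation at $T$ identifies the space of $Q$-Jacobi fields along $\gamma$ with $Q_{\gamma(T)}$, and a direct computation using the integration-by-parts identity shows that the restriction of $E_{**}$ to this subspace is precisely the bilinear form $A$ defined in the statement. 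Adding the interior focal contributions to the index of $A$ yields the claimed formula.

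The main technical obstacle I anticipate is the last identification: one must verify that, on $\mathcal{J}$ in the non-focal case at $T$, the Hessian decomposes cleanly into the accumulated interior jumps plus the single boundary form $A$, with no residual nullity. This requires careful bookkeeping of the boundary contributions produced by integration by parts at both $0$ and $T$ and uses crucially the hypothesis that $\gamma(T)$ is not a $Q$-focal point, so that the evaluation map $\mathcal{J}\to Q_{\gamma(T)}$ is an isomorphism and the boundary form $A$ alone captures the remaining contribution to the index.
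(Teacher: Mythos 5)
The paper does not prove Theorem~\ref{T:EXTINDEX}; it is quoted as a known result from D.~Kalish \cite{dK1}, \cite{dK2} (\emph{The Morse Index Theorem where the Ends are Submanifolds}), and is presented explicitly as an ``extension of the usual Index Theorem.'' There is therefore no in-paper proof to compare your sketch against --- the fair benchmark is Kalish's argument.

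Your outline is in fact the standard route to this kind of result and agrees in broad strokes with Kalish's approach: second variation with the boundary term coming from $S_T$, finite-dimensional reduction to piecewise Jacobi fields, the Morse jump argument at interior focal points, and identification of the residual contribution with the form $A$. Two cautions are worth recording. First, your bookkeeping of boundary conditions is internally inconsistent: you begin by imposing $V(0)=0$ (point at $t=0$) and $V(T)\in Q_{\gamma(T)}$, but later refer twice to ``the $Q$-boundary condition at $t=0$.'' Note that the paper's definition of a $Q$-Jacobi field places the $Q$-condition at $t=0$ (with $J(0)\in Q_{\gamma(0)}$ and $J'(0)-S_0J(0)\perp Q_{\gamma(0)}$), so the interior $Q$-focal points are focal points of the \emph{initial} submanifold, while the bilinear form $A$ captures the \emph{terminal} boundary behavior. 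You must choose one convention and carry it through consistently, making clear which end contributes the jump terms and which contributes $A$. Second, the monotonicity step needs to specify the boundary condition at the sweeping parameter $t$: the clean version sweeps with a Dirichlet condition $V(t)=0$, so that the index jumps exactly at $Q$-focal points, and then accounts separately for the relaxation from $V(T)=0$ to $V(T)\in Q_{\gamma(T)}$ at the final stage; this relaxation is precisely what introduces the index of $A$. You correctly flag the identification of $A$ and the absence of residual nullity as ``the main technical obstacle,'' and indeed that is the only part of the argument that genuinely goes beyond the classical proof --- one must show the evaluation map on the relevant Jacobi space is an isomorphism onto $Q_{\gamma(T)}$ under the non-focal hypothesis and that $E_{**}$ restricted there equals $A$ on the nose. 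Your sketch names this step but does not carry it out, so as written it is an honest plan rather than a proof.
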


\begin{thm}\label{T:EXTMORSE}
Let $M$ be a complete Riemannian manifold and let $P$, $Q$ be two submanifolds of $M$ which do not contain focal points of each other along any geodesic. Then $\Omega(M;P,Q)$ has the homotopy type of a countable CW-complex which contains one cell of dimension $\lambda$ for each horizontal geodesic from $P$ to $Q$ of index $\lambda$.
\end{thm}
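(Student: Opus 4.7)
The plan is to follow the classical Milnor-style approximation by broken geodesics, adapted to allow the endpoints to vary in the submanifolds $P$ and $Q$. First I will fix $c>0$ and consider the sublevel set $\Omega^c = \{\omega \in \Omega(M;P,Q) : E(\omega) \le c\}$. Choosing a subdivision $0=t_0<t_1<\dots<t_k=1$ fine enough that every segment of any path in $\Omega^c$ is shorter than the injectivity radius of $M$, I will define $B_{c,k}$ to be the space of broken geodesics $\omega$ whose restriction to each $[t_{i-1},t_i]$ is a minimizing geodesic, with $\omega(0)\in P$, $\omega(1)\in Q$, and $E(\omega)\le c$. Then $B_{c,k}$ is a finite-dimensional smooth manifold of dimension $\dim P + \dim Q + (k-1)\dim M$, and the minimizing-segment replacement map gives a deformation retraction of $\Omega^c$ onto $B_{c,k}$.

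Next I will analyze $E$ restricted to $B_{c,k}$. A standard first variation computation, keeping the endpoint variation free but in $TP$ and $TQ$ respectively, shows that the critical points are precisely the unbroken geodesics $\gamma:[0,1]\to M$ from $P$ to $Q$ with $\gamma'(0)\perp T_{\gamma(0)}P$ and $\gamma'(1)\perp T_{\gamma(1)}Q$; that is, the horizontal geodesics in the sense of Section~\ref{Ss:MORSE}. At such a $\gamma$, the second variation yields a symmetric bilinear form on the tangent space of $B_{c,k}$ which, after integration by parts, decomposes into the classical index form on vector fields along $\gamma$ plus boundary terms involving the second fundamental forms $S_0$ and $S_T$ of $P$ and $Q$. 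The nullspace of this form consists of Jacobi fields $J$ along $\gamma$ satisfying the $P$-boundary condition at $0$ and the $Q$-boundary condition at $1$; the hypothesis that $P$ and $Q$ contain no focal points of each other along any geodesic therefore makes $E_{**}$ nondegenerate at every critical point, so that $E$ is a Morse function on $B_{c,k}$.

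The Morse index of $\gamma$ as a critical point of $E|_{B_{c,k}}$ then agrees with the index of $\gamma$ as a critical geodesic of $\Omega(M;P,Q)$; by the extended Index Theorem (Theorem~\ref{T:EXTINDEX}) this index is finite and equals the number of $Q$-focal points along $\gamma$ counted with multiplicity plus the index of the boundary form $A$ at $T$. Standard Morse theory applied to the smooth function $E:B_{c,k}\to\mathbb{R}$ (whose sublevel sets are compact, since $B_{c,k}\subset \Omega^c$ is) produces a CW structure on $B_{c,k}$ with exactly one $\lambda$-cell for each critical horizontal geodesic of index $\lambda$ and energy at most $c$. Passing to the direct limit $c\to\infty$, via the compatible inclusions $B_{c,k}\hookrightarrow B_{c',k'}$ for $c<c'$ and $k\mid k'$, and transferring the CW structure across the homotopy equivalences $\Omega^c \simeq B_{c,k}$, yields the asserted countable CW model for $\Omega(M;P,Q)$.

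The main obstacle I foresee is the careful verification of the two technical points on which everything hinges: that the broken-geodesic space $B_{c,k}$ genuinely has the homotopy type of $\Omega^c$ when the endpoints vary in submanifolds (the retraction must be compatible with allowing $\omega(0)$ and $\omega(1)$ to move smoothly within $P$ and $Q$), and that the second-variation formula correctly absorbs the two boundary contributions so that the non-focal hypothesis alone suffices to kill the nullity of $E_{**}$. The rest of the argument is then a mechanical transcription of the Milnor-Morse machinery to this submanifold setting, and the Morse index identification is handed to us by Theorem~\ref{T:EXTINDEX}.
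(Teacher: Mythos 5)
The paper does not prove this theorem: it is stated as a citation to Kalish's work \cite{dK1}, \cite{dK2}, which adapt the classical Bott--Morse path-space theory to the case where both endpoints range over submanifolds. Your reconstruction is a faithful sketch of the argument that actually appears in those references — the broken-geodesic approximation $B_{c,k}$ of $\Omega^c$, the identification of critical points with horizontal geodesics, the second-variation formula with boundary corrections from the second fundamental forms, and the passage to the limit over energy levels — so in that sense you have found the intended proof rather than an alternative.

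Two technical points deserve more explicit treatment than your sketch gives them. First, for $B_{c,k}$ to be a compact finite-dimensional manifold (which you need for Morse theory to deliver a finite CW decomposition at each energy level), one must either assume $P$ and $Q$ compact or bound the portions of $P$ and $Q$ that paths of energy $\le c$ can reach; in the paper's application $P$ is a point and $Q$ a compact orbit, so this is automatic, but the theorem as stated allows general submanifolds and the compactness hypothesis on $P$, $Q$ should be acknowledged. Second, you translate the hypothesis ``$P$, $Q$ contain no focal points of each other'' directly into ``no nonzero Jacobi field satisfies both the $P$- and the $Q$-boundary condition,'' i.e.\ nullity zero at every critical point. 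This is the right statement, but note that the paper's own definition of a $Q$-focal point involves a $Q$-Jacobi field that \emph{vanishes} at the point, which matches the nullity condition only when the other endpoint $P$ is a point; for a positive-dimensional $P$ the correct nondegeneracy condition is the $(P,Q)$-Jacobi one (with $J(T) \in P_{\gamma(T)}$ and $J'(T) - S_T J(T)\perp P_{\gamma(T)}$ rather than $J(T)=0$), which is what Kalish uses and what you implicitly use in the paragraph on the second variation. Spelling out that identification closes the only real gap; otherwise the argument is sound.
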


\subsection{Two-Dimensional Orbit Spaces}
In this section we assume additionally that dim$(M/G) = 2$, and that curv($M) \ge 0$. We begin by showing that $M/G$ is simply-connected, severely limiting the possibilities for the orbit space. We note that a closed curve in $M/G$ can be uniformly approximated by a broken geodesic curve in the same homotopy class.

\begin{thm}\label{T:S-C}
$M/G$ is simply-connected.
\begin{proof}
Let $\gamma(t)$ be a closed curve in $M/G$ with basepoint $p$. As noted above, we can assume that $\gamma$ is piecewise geodesic, without changing its homotopy class. Then we can lift $\gamma(t)$ (not uniquely) to a curve $\tilde{\gamma}(t)$ with both initial and terminal points in $\pi^{-1}(p)$. Since all orbits are arcwise connected, we can join these points by a curve $\alpha$ lying entirely in their common orbit. Since $M$ is simply-connected, there is a homotopy of $\tilde{\gamma}\alpha$ to a point. The projection of this homotopy is the desired homotopy of $\gamma(t)$ to a point.\qedhere
\end{proof}
\end{thm}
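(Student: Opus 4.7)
The plan is to reduce simple-connectedness of the orbit space to that of $M$ by lifting loops to $M$ and using path-connectedness of orbits. Let $\pi : M \to M/G$ be the quotient map and fix a loop $\gamma : [0,1] \to M/G$ based at some point $p$. The first move is to replace $\gamma$ by a more tractable representative in its homotopy class: by uniform approximation (as noted just before the statement), I may assume $\gamma$ is a broken geodesic in the Alexandrov space $M/G$. This is useful because geodesics in $M/G$ admit horizontal lifts in $M$, so a piecewise-geodesic loop can be lifted segment by segment.

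Next I would produce a lift $\tilde\gamma : [0,1] \to M$ with $\pi \circ \tilde\gamma = \gamma$. Because $\gamma$ is a loop, both endpoints $\tilde\gamma(0)$ and $\tilde\gamma(1)$ lie in the fiber $\pi^{-1}(p)$, i.e.\ in a single $G$-orbit $Gp_0$. Since $G$ is connected, the orbit is path-connected, so I can choose a path $\alpha$ in $Gp_0$ from $\tilde\gamma(1)$ back to $\tilde\gamma(0)$. The concatenation $\tilde\gamma \ast \alpha$ is then an honest loop in $M$.

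Now I invoke the hypothesis that $M$ is simply-connected: $\tilde\gamma \ast \alpha$ is null-homotopic via some homotopy $H : [0,1]^2 \to M$ with $H(\cdot,0) = \tilde\gamma \ast \alpha$ and $H(\cdot,1)$ constant. Projecting this down by $\pi$ yields a homotopy in $M/G$. Since $\alpha$ lies in a single orbit, $\pi \circ \alpha$ is constant at $p$, so $\pi \circ (\tilde\gamma \ast \alpha)$ is homotopic (indeed equal, up to reparametrization) to $\gamma$. Thus $\pi \circ H$ provides a null-homotopy of $\gamma$ in $M/G$.

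The only step that could cause concern is the existence of the lift $\tilde\gamma$, since $\pi$ is not a covering map in any reasonable sense when there are singular orbits. This is where piecewise geodesicity pays off: each geodesic segment in $M/G$ lifts to a horizontal geodesic in $M$ by standard Riemannian submersion theory on the regular part, and the lift extends continuously across singular orbits because the horizontal lift of a short geodesic emanating from a singular point is well-defined once an initial horizontal vector is chosen. Granting this technical point, the argument goes through cleanly.
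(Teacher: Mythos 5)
Your proof is correct and follows essentially the same route as the paper: reduce to a piecewise-geodesic loop, lift it to $M$, close it up with a path $\alpha$ inside the fiber orbit, use simple-connectedness of $M$ to contract, and project the contraction back down. The only difference is that you spell out the horizontal-lift technicality that the paper leaves implicit with the parenthetical ``(not uniquely),'' which is a reasonable thing to make explicit.
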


The set of conjugacy classes of isotropy groups form a lattice having a smallest element [$H$]. Orbits whose isotropy groups are conjugate to $H$ are called \textit{principal} orbits. The principal orbits form an open dense submanifold of $M/G$. Non-principal orbits having the same dimension as a principal orbit are called \textit{exceptional} orbits. The remaining orbits are called \textit{singular} orbits. The orbit space is stratified by this lattice. There is a single 2-dimensional stratum corresponding to the principal isotropy group. The other orbit strata are either 0 or 1-dimensional. These orbit strata are totally geodesic submanifolds of $M/G$. Combining theorems (3.12, 4.4, 4.5, 8.3, 8.6) in chapter IV of Bredon~\cite{gB}, we have the following useful theorem.

\begin{thm}\label{T:BREDON}
Suppose that $M$ is simply-connected and supports a cohomogeneity two action.\newline
a) If $\partial M/G \ne \varnothing$, the boundary consists entirely of singular orbits and there are no exceptional orbits.\newline
b) If $\partial M/G = \varnothing$, the exceptional orbits in $M/G$ are isolated.
\end{thm}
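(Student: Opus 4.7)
The plan is to derive the statement as a packaged consequence of the five theorems from Chapter IV of Bredon cited in the text, rather than to give a fresh proof, since the result is essentially a specialization of their content to the setting $\dim(M/G)=2$. The organizing idea is the slice theorem together with the classification of two-dimensional orbit-space neighborhoods.

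First I would invoke the slice theorem (Bredon IV.3.12) to obtain the local model for $M/G$ around any point. A $G$-invariant tubular neighborhood of an orbit $Gp$ is equivariantly diffeomorphic to $G\times_{G_p} V_p$, where $V_p$ is the normal slice at $p$, so a neighborhood of $[p]$ in $M/G$ is homeomorphic to $V_p/G_p$. Under the cohomogeneity-two hypothesis this quotient is two-dimensional, and combining IV.4.4 and IV.4.5 yields the possible local models: an open disk (principal), an open disk quotiented by a finite rotation group producing an isolated cone point (exceptional), an open half-disk producing a smooth boundary arc (singular of reflection type), and a sector (singular of corner type). This dictionary matches each orbit type to its local image in $M/G$.

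Part (a) then splits in two. The assertion that $\partial(M/G)$ consists entirely of singular orbits is immediate from the classification: only reflection- and corner-type slice representations produce boundary points, and both occur only at isotropies which strictly increase the orbit codimension, i.e.\ at singular orbits. The second assertion, that no exceptional orbits exist when $\partial(M/G)\ne\varnothing$, is the substantive point; here I would quote Bredon IV.8.3 and IV.8.6, which translate the hypothesis $\pi_1(M)=0$ into a presentation of the fundamental group of the regular set in terms of meridians around each non-principal stratum. In the presence of a reflective boundary stratum, the relations from the boundary already exhaust the available generators; any additional finite isotropy at an exceptional orbit would contribute a torsion element incompatible with $\pi_1(M)=0$.

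For part (b), the same local classification shows that without boundary there are no reflective or corner strata, so every non-principal orbit must be exceptional and hence have a local model that is a cone with isolated vertex. Consequently the exceptional orbits form a discrete subset of $M/G$. The only step in this plan that is not a direct reading of the slice model is the simple-connectedness argument in (a); that is where the content of Bredon's IV.8.3 and IV.8.6 is essential, and it is the part that I expect any reader to verify most carefully before accepting the citation.
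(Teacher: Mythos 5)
The paper offers no proof of this theorem at all---the entire ``proof'' is the sentence that it follows by combining Bredon IV.3.12, 4.4, 4.5, 8.3, 8.6---so your plan of assembling exactly those results is the same route. That said, two of the intermediate claims you make while fleshing out the citation are wrong, and the first one is a real gap.

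You assert that the first clause of (a), ``the boundary consists entirely of singular orbits,'' is ``immediate from the classification'' because reflection- and corner-type slice representations ``occur only at isotropies which strictly increase the orbit codimension.'' That is false. An exceptional orbit has codimension $2$ (same dimension as a principal orbit) with finite $K/H$, and a finite subgroup of $O(2)$ can perfectly well contain a reflection; the quotient of the $2$-dimensional slice by such a group is a half-disk, and the orbit maps to a boundary point of $M/G$. These are precisely Bredon's \emph{special exceptional orbits}. Ruling them out is not a slice-model observation; it is the whole content of Bredon IV.8, using $\pi_1(M)=0$. So both clauses of (a) belong in the ``substantive'' bucket, not just the second. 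Your sketch of the $\pi_1$ argument itself (``the relations from the boundary already exhaust the available generators \ldots a torsion element incompatible with $\pi_1(M)=0$'') is also too vague to stand on its own---it gestures at the right mechanism but does not reproduce Bredon's argument, which runs through the structure of the orbit map over the regular stratum.

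In part (b) you write that ``without boundary there are no reflective or corner strata, so every non-principal orbit must be exceptional.'' The conclusion you want is correct, but this sentence is not: singular orbits (strictly lower-dimensional, $\dim(K/H)\ge 1$) can certainly project to interior points of a boundaryless $M/G$. What the absence of boundary actually gives you is that no finite slice quotient $K/H\subset O(2)$ at an \emph{exceptional} orbit can contain a reflection, hence each such $K/H$ lies in $SO(2)$ and the local model is an interior cone point, so exceptional orbits are isolated. Rephrase (b) in those terms and it is fine.

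\end{document}
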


The map $\pi : M \rightarrow M/G$, when restricted to principal orbits, is a Riemannian submersion. We shall need to consider the case where curv$(M/G) = 0$, $\partial M/G = \varnothing$, and the only non-principal orbits are exceptional. Since we are assuming that curv$(M) \ge 0$, this implies that curv$(M) = 0$. If $X$ and $Y$ are horizontal vectors, it follows that $A_X Y = 0$; i.e., that $[X,Y]$ is horizontal. Then the horizontal distribution is integrable, and there is an immersed submanifold $S$ that is orthogonal to each of the vertical subspaces. It is known in this case that the action is polar~\cite{hB}, and we have the following theorem of Alexandrino and T\"{o}ben~\cite{mAdT}.

\begin{thm}\label{T:NOEXCEPTIONAL}
A non-trivial polar action on a simply-connected manifold has no exceptional orbits.
\end{thm}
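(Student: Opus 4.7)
The plan is to exploit the polar structure by reducing the statement to a question about a discrete action of a generalized Weyl group on the section. Fix a section $\Sigma$ of the $G$-action and set $N(\Sigma) = \{g \in G : g\Sigma = \Sigma\}$ and $Z(\Sigma) = \{g \in G : g|_\Sigma = \mathrm{id}\}$. The principal isotropy group $H$ coincides with $Z(\Sigma)$, so the \emph{generalized Weyl group} $W := N(\Sigma)/Z(\Sigma)$ is discrete, acts by isometries on $\Sigma$, and the orbit map descends to a homeomorphism $\Sigma/W \cong M/G$. For any $p$ on a principal or exceptional orbit, $T_p\Sigma$ is the full normal space to $G\cdot p$; the uniqueness (up to the $G$-action) of a totally geodesic section through a regular point then forces $G_p \subset N(\Sigma)$, so the $W$-stabilizer is $W_p \cong G_p/H$ and acts faithfully on $T_p\Sigma$. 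Thus the existence of an exceptional orbit is equivalent to the existence of a point $p \in \Sigma$ whose $W$-stabilizer is a non-trivial finite group.

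Next I would classify the possible types of $W_p$ according to their slice representations. Any element $w \in W_p$ whose fixed subspace in $T_p\Sigma$ has codimension one is a reflection; its wall of fixed points is a totally geodesic hypersurface of $\Sigma$ whose projection into $\Sigma/W \cong M/G$ lies in $\partial(M/G)$, and by Theorem \ref{T:BREDON}(a) the corresponding orbits in $M$ are singular rather than exceptional. Hence the existence of an exceptional orbit forces the existence of some $W_p$ that is non-trivial and yet contains no reflection. The theorem therefore reduces to showing that the $W$-action on $\Sigma$ is generated by reflections across its wall hyperplanes.

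The main obstacle, and the sole place where the hypothesis $\pi_1(M) = 0$ enters, is establishing this reflection-generation. My approach would be to delete from $M$ the union of the exceptional orbits, which by Theorem \ref{T:BREDON}(b) are isolated in $M/G$ and therefore form a set of codimension two in $M$; the resulting open set $M^*$ satisfies $\pi_1(M^*) = \pi_1(M) = 0$. A van Kampen analysis of $M^*/G$, combined with the principal-orbit fibration $G/H \hookrightarrow M^{\mathrm{reg}} \to M^{\mathrm{reg}}/G$ and the identification of $\Sigma \cap M^{\mathrm{reg}} \to (\Sigma \cap M^{\mathrm{reg}})/W$ as a regular covering with deck group $W$ modulo its subgroup generated by reflections, would match each non-reflection element $w \in W_p$ with a small loop in $M^*$ encircling the exceptional orbit through $p$. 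Such a loop must be non-trivial in $\pi_1(M^*)$ because it realizes $w$ as a deck transformation, while simultaneously bounding in $M$, contradicting $\pi_1(M) = 0$. The delicate technical point, which I expect to absorb most of the effort, is lifting loops consistently from the orbit-space side back to $M^*$ and controlling the behavior across the codimension-one reflection strata during this lift.
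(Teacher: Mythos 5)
The paper does not actually prove Theorem~\ref{T:NOEXCEPTIONAL}: it is quoted verbatim from Alexandrino--T\"oben~\cite{mAdT} and is used as a black box. So there is no internal argument to compare against, and the relevant question is whether your blind proof is correct.

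Your high-level strategy---pass to the section $\Sigma$, identify $W_p \cong G_p/H$ acting faithfully on $T_p\Sigma$ for $p$ regular, observe that reflections give walls mapping to $\partial(M/G)$, and reduce the theorem to ``the $W$-action is generated by reflections''---is the right framework and is broadly in the spirit of the known proofs. But the step where simple-connectivity is invoked has a genuine gap. You assert that deleting the exceptional orbits, which form a codimension-two subset of $M$, leaves $\pi_1(M^*) = \pi_1(M) = 0$. Removing a codimension-two submanifold does \emph{not} preserve the fundamental group (the inclusion only induces a surjection $\pi_1(M^*) \twoheadrightarrow \pi_1(M)$, and the kernel is typically nontrivial; compare $\mathbf{R}^2\setminus\{0\}$). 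Since your intended contradiction is precisely ``a loop encircling the exceptional orbit is non-trivial in $\pi_1(M^*)$, but $\pi_1(M^*) = 0$,'' the argument collapses: the loop being non-trivial in $\pi_1(M^*)$ while trivial in $\pi_1(M)$ is perfectly consistent. What you actually need is a reason why the loop cannot bound a disk in $M$ itself, which requires a finer analysis of what happens when such a disk crosses the exceptional orbit (or, equivalently, a covering-space argument over all of $M$ rather than $M^*$). This is where the real work in Alexandrino--T\"oben lies, and it cannot be replaced by a codimension count. A secondary issue: you invoke Theorem~\ref{T:BREDON}, which the paper states only for cohomogeneity two, whereas Theorem~\ref{T:NOEXCEPTIONAL} is stated for polar actions of arbitrary cohomogeneity; the analogous facts hold in general, but you would need to cite or prove them rather than the cohomogeneity-two special case.
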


\noindent Grove and Ziller~\cite{kGwZ} show that in this case, when there are no singular orbits, that $M = \textbf{R}^2 \times G/H$ where $G/H$ is a principal orbit. But then $M$ is not compact, a contradiction. For emphasis, we summarize this discussion in the following theorem.

\begin{thm}
No flat two-dimensional orbit space is homeomorphic to a sphere.
\end{thm}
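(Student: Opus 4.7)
The plan is to argue by contradiction: suppose $M/G$ is a flat two-dimensional orbit space homeomorphic to $S^2$. Since $S^2$ has empty boundary, Theorem \ref{T:BREDON}(b) tells me that any non-principal orbits must be exceptional and isolated in $M/G$. My strategy is to rule these out as well, arriving at a setting where Grove-Ziller forces $M$ to be non-compact.

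The pivot is the O'Neill formula from Section \ref{Ss:Foliation} applied to the Riemannian submersion over the principal stratum: for orthonormal horizontal vectors $X,Y$, I have
\[
K' = K + 3|A_X Y|^2.
\]
Combined with $\operatorname{curv}(M)\ge 0$ and $\operatorname{curv}(M/G)=0$, this forces $A_X Y \equiv 0$ on the principal part, and then the identity $A_X Y = \tfrac{1}{2} v[X,Y]$ gives that the horizontal distribution is integrable. Its integral leaves are submanifolds meeting every orbit orthogonally, so by Boualem~\cite{hB} the $G$-action is polar.

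At this point I invoke Theorem \ref{T:NOEXCEPTIONAL} of Alexandrino and T\"{o}ben to eliminate exceptional orbits on the simply-connected $M$. Together with the absence of singular orbits coming from $\partial(M/G)=\emptyset$ and Theorem \ref{T:BREDON}(b), every orbit is now principal. The Grove-Ziller splitting~\cite{kGwZ} then yields $M \cong \textbf{R}^2 \times G/H$, contradicting the compactness of $M$ because the $\textbf{R}^2$ factor is not compact.

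The whole proof is essentially a chain of named results; the only piece I have to assemble by hand is the passage from the curvature hypotheses to integrability of the horizontal distribution via O'Neill's formula, so the main obstacle is really just making sure the O'Neill step is honest. One degenerate case to check separately is $G$ trivial, where $M = M/G$ would have to be a smooth flat sphere, which is ruled out immediately by Gauss-Bonnet.
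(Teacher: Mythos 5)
Your proposal is correct and follows essentially the same chain of named results as the paper: O'Neill's formula with $\operatorname{curv}(M)\ge 0$ and $\operatorname{curv}(M/G)=0$ forces $A_X Y\equiv 0$ and hence integrability of the horizontal distribution, Boualem gives polarity, Alexandrino--T\"{o}ben eliminates exceptional orbits, and Grove--Ziller yields the splitting $M\cong \mathbf{R}^2\times G/H$ contradicting compactness. Your added remark about the degenerate trivial-$G$ case (dispatched via Gauss--Bonnet) is a small but reasonable piece of extra care not spelled out in the paper.
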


We shall later need to consider triangles in $M/G$ whose sides are geodesics. As a straightforward consequence of the Gauss-Bonnet theorem, we have the following basic theorem regarding geodesic triangles.

\begin{thm}\label{T:ANGLESUM}
Suppose that dim$(M/G)=2$ and that $\alpha,\beta,\gamma$ are the angles of a geodesic triangle in $M/G$ whose interior consists entirely of principal orbits.~If curv$(M/G) > 0$, then $\alpha+\beta+\gamma > \pi$. If curv$(M/G) = 0$, then $\alpha+\beta+\gamma = \pi$.
\end{thm}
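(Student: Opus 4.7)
My plan is to reduce the statement to a direct application of the classical Gauss--Bonnet theorem on the open principal stratum of $M/G$, which is a smooth Riemannian 2-manifold.

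First, I would recall from Subsection~\ref{Ss:Foliation} and the discussion preceding the theorem that $\pi:M\to M/G$, restricted to the preimage of the principal stratum $(M/G)_{\mathrm{princ}}$, is a Riemannian submersion. Hence $(M/G)_{\mathrm{princ}}$ inherits a smooth Riemannian metric, and the symbol $\mathrm{curv}(M/G)$ (as used in the statement) refers to the Gaussian curvature $K$ of this smooth metric. By hypothesis, the open triangular region $T$ bounded by the three geodesic sides lies entirely in $(M/G)_{\mathrm{princ}}$, so $K$ is defined (and either strictly positive or identically zero) on all of $T$.

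Next, I would apply the Gauss--Bonnet formula to $T$. Since the three sides of $T$ are geodesic segments in $M/G$, their geodesic curvature vanishes, and the boundary contributes only the three exterior-angle terms $\pi-\alpha$, $\pi-\beta$, $\pi-\gamma$. Gauss--Bonnet then reduces to
\begin{equation*}
\int_T K\, dA \;=\; \alpha+\beta+\gamma-\pi.
\end{equation*}
If $K>0$ throughout $T$, the left side is strictly positive (as $T$ has positive area), yielding $\alpha+\beta+\gamma>\pi$; if $K\equiv 0$, the left side vanishes, giving $\alpha+\beta+\gamma=\pi$.

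The one technical point requiring care is that the vertices of $T$ are allowed to lie on the non-principal strata, where $M/G$ is only an Alexandrov space rather than a smooth manifold, so the classical Gauss--Bonnet statement does not literally apply at those corner points. I would handle this by an exhaustion argument: pick a sequence of slightly shrunken geodesic triangles $T_n\subset T$ whose vertices lie in the open principal stratum and whose angles $\alpha_n,\beta_n,\gamma_n$ converge to $\alpha,\beta,\gamma$ (using the fact that angles in Alexandrov spaces are defined as limits of angles of short geodesic hinges). Applying Gauss--Bonnet on each $T_n$ and passing to the limit, using dominated convergence for $\int_{T_n}K\, dA\to\int_T K\, dA$, gives the desired identity. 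This limiting step is the only real subtlety; everything else is routine.
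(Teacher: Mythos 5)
Your proposal takes the same route the paper does: the paper offers no proof at all, merely asserting that the statement is ``a straightforward consequence of the Gauss--Bonnet theorem,'' and you spell out exactly that deduction. The one place where you add genuine content is in flagging the corner issue --- the vertices of the triangle may lie on non-principal (hence non-smooth) strata, where the classical smooth Gauss--Bonnet statement does not literally apply --- and proposing an exhaustion-by-interior-triangles argument. That concern is legitimate and the remedy is reasonable, though as written it is sketched rather than carried out: shrinking a geodesic triangle inward while keeping the sides geodesic and the angles converging to the Alexandrov angles at the original vertices is plausible but not automatic, and one would either need to argue it carefully or instead invoke the Gauss--Bonnet theorem for surfaces with conical singularities (equivalently, for two-dimensional Alexandrov spaces with curvature bounded below), which handles the vertices directly since the interior of $T$ is smooth by hypothesis. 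Either way, you have correctly identified both the main mechanism and the only technical subtlety; the paper simply chose not to record any of this.
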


\section{Two-Dimensional Orbit Spaces of Non-Negative Curvature}

We continue to assume that $M$ is a compact, connected and simply-connected manifold of nonnegative curvature, and that dim$(M/G) = 2$. We shall classify all possible two dimensional orientable orbit spaces that can admit a metric of nonnegative curvature in the sense of Alexandrov. The points of $M/G$ corresponding to principal orbits form a smooth manifold, with boundary either empty or consisting of closed geodesic arcs and exceptional orbits. It is well-known that the angle between two boundary arcs must be an angle in the set $\{\pi/2$, $\pi/3$, $\pi/4$, $\pi/6\}$. In the case where $\partial M/G = \varnothing$, the interior may also contain isolated singular points; i.e., the exceptional orbits. At an exceptional orbit, the tangent space is known to be a cone, subtending a solid angle of $2\pi/p$, for some integer $p \ge 2$.

\subsection{Positive Curvature}
In this section we shall eliminate those orbit spaces that cannot carry a positive curvature metric, and show that the remaining ones do support such a metric. Many of these can be given a metric of constant positive curvature and can generate tilings of the 2-sphere. Our method of showing that an orbit space does not support positive curvature is quite simple. We triangulate the orbit space in some way, obtain the sum of all angles for all triangles, and divide by the number of triangles. If the average is less than or equal to $\pi$, we conclude that at least one of the triangles must have an angle sum less than or equal to $\pi$. Our first step in the classification is to establish limitations on the number of singular points and boundary arcs. 

\begin{thm}\label{T:SING}
Let $M/G$ be an orientable 2-dimensional orbit space having positive curvature in the sense of Alexandrov. Then\newline
a) $\partial M/G$ cannot consist of more than 3 boundary arcs.\newline
b) If $\partial M/G = \varnothing$, then there can be at most 3 exceptional orbits.\newline
c) If $\partial M/G$ consists of 1 boundary arc, then there can be at most 1 singular boundary point. If $\partial M/G$ consists of 2 or 3 boundary arcs, then there will be 2 or 3 singular boundary points, respectively. In any case, there are no exceptional orbits. 
\end{thm}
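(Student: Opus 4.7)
The plan is to execute precisely the pigeonhole-on-triangulations strategy that the author sketches just before the theorem, exploiting two sharp constraints on $M/G$: interior angles at singular boundary points lie in $\{\pi/2,\pi/3,\pi/4,\pi/6\}$ (in particular, at most $\pi/2$), and the cone angle at an exceptional orbit is $2\pi/p$ for some integer $p\ge 2$ (hence at most $\pi$). In each case I triangulate the orbit space by geodesic triangles whose interiors consist entirely of principal orbits, so that Theorem~\ref{T:ANGLESUM} gives angle sum strictly greater than $\pi$ in every triangle. If $T$ is the total number of triangles and $\Sigma$ is the grand total of interior angles over all triangles, then $\Sigma > T\pi$, and a contradiction appears whenever the corner/cone-angle constraints force $\Sigma \le T\pi$.

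For part (a), simple connectedness, orientability, and non-empty boundary make $M/G$ a topological disk, while Theorem~\ref{T:BREDON}(a) rules out exceptional orbits. Consecutive boundary arcs meet at genuine corners (otherwise they would combine into a single arc), so $n$ boundary arcs produce exactly $n$ singular boundary points. For $n\ge 3$, a fan triangulation from one of these corners cuts $M/G$ into $n-2$ geodesic triangles and creates no new vertices, so $\Sigma$ equals the sum of interior polygon angles, which is at most $n\pi/2$. The strict Theorem~\ref{T:ANGLESUM} bound gives $n\pi/2 > (n-2)\pi$, i.e., $n < 4$, so $n \le 3$.

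For part (b), simple-connectedness (Theorem~\ref{T:S-C}), orientability, and the empty boundary make $M/G$ a topological $2$-sphere. I take a geodesic triangulation whose vertex set includes every exceptional orbit, adding auxiliary vertices in the principal stratum if needed; with $V$ vertices Euler's formula gives $2V-4$ triangular faces. Each auxiliary vertex contributes total angle $2\pi$ and each exceptional orbit contributes at most $\pi$, so $\Sigma \le (V-k)\cdot 2\pi + k\pi = (2V-k)\pi$. This must strictly exceed $(2V-4)\pi$, forcing $k \le 3$.

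For part (c), the disk-triangulation argument of part (a) applied to the $s$ singular boundary points (with $s$ in place of $n$) yields $s \le 3$ regardless of the arc count. For $n \in \{2,3\}$, the equality $s = n$ is immediate, since $n$ geodesic arcs cyclically bordering a disk meet at exactly $n$ junctions and each junction, having angle in $\{\pi/2,\pi/3,\pi/4,\pi/6\}\subset (0,\pi)$, is a genuine corner. For $n = 1$ the lone arc is either a smooth closed geodesic ($s = 0$) or a loop closing with one corner ($s = 1$), giving $s \le 1$; and Theorem~\ref{T:BREDON}(a) again rules out exceptional orbits. The main obstacle I anticipate is the degenerate small-case accounting---monogon ($n=1$), bigon ($n=2$), and $S^2$ with $0$, $1$, or $2$ cone points---where no fan triangulation from an existing corner is available, so one must add auxiliary vertices in the principal stratum while verifying that each resulting geodesic triangle actually has its interior in the principal stratum, so that Theorem~\ref{T:ANGLESUM} applies with strict inequality.
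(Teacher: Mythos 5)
Your proposal is correct and follows the same overall strategy as the paper: triangulate the orbit space, tally the total interior angles against the bound coming from the positive-curvature Gauss--Bonnet inequality (Theorem~\ref{T:ANGLESUM}). Parts (a) and (c) match the paper essentially verbatim, including the fan triangulation from a single boundary corner and the observation that each arc junction has angle in $\{\pi/2,\pi/3,\pi/4,\pi/6\}$ so is a genuine corner. Part (b) is where you diverge: the paper picks exactly four exceptional orbits, joins three into a geodesic triangle, then joins the fourth to all three, producing a concrete decomposition into four triangles with total angle $\le 4\pi$; you instead take a geodesic triangulation of the whole sphere with every exceptional orbit among the vertices, invoke $F = 2V - 4$ from Euler's formula, and use the $2\pi$-versus-$\le\pi$ dichotomy at manifold versus cone vertices to get $k\le 3$. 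The Euler-characteristic version is more systematic and extends painlessly if auxiliary vertices are inserted, but it presupposes the existence of such a geodesic triangulation of the Alexandrov $S^2$ with a prescribed vertex set, which you do not justify; the paper's four-point construction is more elementary and closer to being self-verifying (all diagonals emanate from one point, so they cannot cross one another). Both proofs share the same implicit assumption that the chosen geodesics have interiors in the principal stratum so that Theorem~\ref{T:ANGLESUM} applies, which you flag explicitly at the end, matching the paper's level of rigor.
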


\begin{proof}

a): Suppose that there are $k > 3$ boundary arcs and, therefore, $k$ singular boundary points. Choose one of these points $p$ and construct geodesic arcs to each of the other singular boundary points not adjacent to $p$. This construction creates $k - 2$ triangles for which the total angle sum is at most $k\pi/2$. The average angle sum for a triangle is \Large $\frac{\pi}{2} \cdot \frac{k}{k - 2}\ $\normalsize  $\le \pi$, if $k \ge 4$.

b): Suppose that there are at least four exceptional orbits. Choose any three of them and connect the points pairwise by minimizing geodesics, thus forming a geodesic triangle. Connect the fourth point to each of the other three points by minimizing geodesics. This construction creates four triangles with a total angle sum of at most $4\pi$.

c): If the boundary consists of a single arc, there can be at most one boundary singular point (if the ends of the arc intersect at an angle less than $\pi$). Otherwise, there are as many singular boundary points as the number of arcs. By theorem~\ref{T:BREDON}, there are no exceptional orbits.\qedhere
\end{proof}

It is convenient to divide the classification into two cases, depending on whether the boundary of $M/G$ is empty or not. We assume hereafter that $\alpha, \beta, \gamma$ are angles in the set $\{\pi/2, \pi/3, \pi/4, \pi/6\}$ and that $p, q, r$ are integers $\ge 2$.

\subsubsection{Empty Boundary}
When $\partial M/G = \varnothing$, $M/G$ is topologically $S^2$ but may have interior singular points. We have seen that there can be at most three such points. These are orbifold points resulting from an orientable $\textbf{Z}_p$ action, subtending an angle of $2\pi/p$, for $p \ge 2$. 

\noindent \textbf{No Singular Point:} This is the case where $M/G = S^2$ and trivially supports a metric of constant positive curvature.

\noindent \textbf{One Singular Point:} When there is only one singular point, the angle has a value  $2\pi/p$. This orbit space is sometimes called the $\textbf{Z}_p$ teardrop and is known to arise from an $S^1$ action on $S^3$. While this space has positive curvature, it cannot be given a metric of constant positive curvature.

\noindent \textbf{Two Singular Points:} If there are two singular points, let the angles be $2\pi/p$ and $2\pi/q$. This orbit space is known as the  $\textbf{Z}_p$-$\textbf{Z}_q$ football and, like the $\textbf{Z}_p$ teardrop, supports positive curvature but not constant positive curvature, unless $p=q$.

\noindent \textbf{Three Singular Points:} Finally, we consider the case of three singular points with angles $2\pi/p$, $2\pi/q$ and $2\pi/r$. We can view this space as the double of the triangle having angles $\pi/p$, $\pi/q$ and $\pi/r$. There are several possibilities for $p$, $q$ and $r$. One set of possibilities is $p = q = 2$ and $r \ge 2$. The others are $p=2$, $q=3$ and $r=3,\ 4$ or $5$. Each of these spaces can be realized as a spherical triangle in $S^2$ and, consequently, can be given a constant positive curvature metric.

\subsubsection{Non-Empty Boundary}
When $\partial M/G \ne \varnothing$, the boundary will consist of from one to three arcs and, by theorem~\ref{T:BREDON}, will not contain an interior singular point. 

\noindent \textbf{One Boundary Arc:} If the boundary has no singular points the orbit space can be given the metric of a closed hemisphere of $S^2$. If the boundary has one singular point with angle $\alpha$, the the double of $M/G$ is the $\textbf{Z}_p$ teardrop with $p = \pi/\alpha$ and $M/G$ supports positive but not constant curvature.

\noindent \textbf{Two Boundary Arcs:} Let the boundary angles be denoted by $\alpha$ and $\beta$. The double of $M/G$ is the $\textbf{Z}_p$-$\textbf{Z}_q$ football where $p = \pi/\alpha$ and $q = \pi/\beta$. Therefore, $M/G$ supports positive but not constant curvature (unless $\alpha = \beta$).

\noindent \textbf{Three Boundary Arcs:} Let the boundary angles be denoted by $\alpha$, $\beta$ and $\gamma$. If $\alpha$, $\beta = \pi/2$, then $\gamma$ can be any angle in the set $\{\pi/2, \pi/3, \pi/4, \pi/6\}$. If $\alpha = \pi/2$ and $\beta = \pi/3$, then $\gamma = \pi/3$ or $\pi/4$. No other combinations are possible and all of these cases can be realized as spherical triangles.

\subsection{Zero Curvature}
We shall now consider the case where the manifold does not have strictly positive curvature. The following theorem shows that it is sufficient to consider only flat metrics on the orbit space.

\begin{thm}\label{T:FLAT}
Let $M/G$ be an orientable 2-dimensional orbit space having nonnegative but not strictly positive curvature in the sense of Alexandrov.\newline
a) If $\partial M/G \ne \varnothing$, then we may assume that $\partial M/G$ consists of 3 or 4 boundary arcs.\newline
 b) If $\partial M/G = \varnothing$, then $M/G$ contains a point of positive curvature and the orbit space can be given a metric of strictly positive curvature.
\end{thm}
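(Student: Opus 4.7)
The plan is to apply Gauss--Bonnet to the two-dimensional Alexandrov orbit space in each case, combined with the restriction that boundary-corner angles lie in $\{\pi/2,\pi/3,\pi/4,\pi/6\}$ and interior cone angles have the form $2\pi/p$ with $p\ge 2$. These two pieces of information pin down the allowed configurations by elementary arithmetic.

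For part (a), Theorem~\ref{T:BREDON}(a) tells us that with $\partial M/G\ne\varnothing$ the boundary is a union of geodesic arcs meeting at singular boundary points, with no exceptional orbits in the interior, so $M/G$ is a topological disk with $k$ geodesic boundary arcs and corner angles $\alpha_1,\ldots,\alpha_k$. Gauss--Bonnet reads
\[
\int_{M/G} K\, dA \;+\; \sum_{i=1}^{k}(\pi-\alpha_i)\;=\;2\pi.
\]
Since $K\ge 0$ and each $\alpha_i\le \pi/2$, one obtains $(k-2)\pi\le\sum\alpha_i\le k\pi/2$, forcing $k\le 4$. To eliminate $k\in\{1,2\}$ I would compute the total curvature explicitly: $\int K$ equals $2\pi$ (if $k=1$ with no corner), $\pi+\alpha_1$ (if $k=1$ with one corner), or $\alpha_1+\alpha_2$ (if $k=2$). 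Each of these is strictly positive, so the orbit space already falls into the strictly positive curvature classification of Subsection~3.1 (hemisphere, spherical digon, or half-football) and can be re-metrized with strictly positive curvature. This is the sense in which one ``may assume'' $k\in\{3,4\}$.

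For part (b), Theorem~\ref{T:BREDON}(b) forces any non-principal orbits to be isolated exceptional orbits, producing cone points of angles $2\pi/p_i$ on $M/G\cong S^2$. The Alexandrov Gauss--Bonnet formula
\[
\int_{M/G\setminus\{\text{cones}\}} K\, dA \;+\; \sum_{i} 2\pi\!\left(1-\frac{1}{p_i}\right)\;=\;4\pi
\]
has nonnegative integrand by hypothesis. If the integral vanished, then the smooth part of $M/G$ would be flat, placing us in the setup of the unnumbered ``no flat two-dimensional orbit space is homeomorphic to a sphere'' theorem preceding Section~3 (via polar actions, Theorem~\ref{T:NOEXCEPTIONAL}, and Grove--Ziller), a contradiction. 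Hence the integral is strictly positive, so $M/G$ contains a point of positive curvature. The same identity yields $\chi^{\mathrm{orb}}=2-\sum_i(1-1/p_i)>0$, so $M/G$ is a spherical 2-orbifold (teardrop, football, or $(p,q,r)$-triangle double with $1/p+1/q+1/r>1$) and admits a metric of constant positive curvature.

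The main obstacle I foresee is justifying Gauss--Bonnet rigorously on an Alexandrov orbit space rather than on a smooth Riemannian orbifold --- one must check that the corner and cone-point contributions appear with the expected weights --- and then verifying carefully that ``zero integral over the smooth part'' reduces precisely to the hypothesis ``curv$(M/G)=0$'' needed to invoke the preceding no-flat-sphere theorem. Once these structural points are in place the remaining enumeration of the allowed $\{\alpha_i\}$ and $\{p_i\}$ configurations is routine.
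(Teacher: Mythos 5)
Your argument is correct and is essentially the paper's own proof recast in integral rather than discrete form: the paper triangulates the orbit space and bounds the \emph{average} angle sum (via Theorem~\ref{T:ANGLESUM}), which is a combinatorial repackaging of precisely your Gauss--Bonnet inequalities, and both of you rule out a boundaryless flat orbit space by the same chain (O'Neill to flatness of $M$, integrability of the horizontal distribution, polarity, Alexandrino--T\"oben, Grove--Ziller). Your routing of the $k=1,2$ boundary-arc configurations also lands in the same place as the paper's: those are cases 9--10 of Table~\ref{Ta:PosCurv}, which do not tile and are therefore handled by the Grove--Halperin argument regardless of whether one re-metrizes them or not.

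One small overclaim at the end of part~(b): you conclude that $M/G$ ``admits a metric of constant positive curvature,'' but a teardrop or unequal football ($p\neq q$) has $\chi^{\mathrm{orb}}>0$ and yet does \emph{not} admit constant positive curvature, as the paper itself notes in Section~3.1. The theorem statement (and the correct conclusion) is merely strictly positive, not constant, curvature. The same slip appears in the paper's own proof text, so it is worth flagging rather than propagating. Your concern about justifying Gauss--Bonnet directly on an Alexandrov orbit space is legitimate; the paper's triangulation device sidesteps it by confining the curvature hypothesis to geodesic triangles whose interiors lie in the smooth principal stratum, where Gauss--Bonnet is classical.
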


\begin{proof}
a): Suppose that there are $k > 4$ boundary arcs and, therefore, $k$ singular boundary points. Choose one of these points $p$ and construct geodesic arcs to each of the other singular boundary points not adjacent to $p$. This construction creates $k - 2$ triangles for which the total angle sum is at most $k\pi/2$. The average angle sum for a triangle is \Large $\frac{\pi}{2} \cdot \frac{k}{k - 2}\ $\normalsize  $< \pi$, if $k > 4$. Orbit spaces with one or two boundary arcs, if they exist, are essentially the same as similar orbit spaces described in the section on positive curvature, and a proof of the main theorem for all of these spaces will be given that does not depend on the curvature of the space.

b): By the results of section~\ref{Ss:Foliation}, the orbit space must have a point of positive curvature. It cannot contain four exceptional orbits since, in this case, all of the angles at the exceptional orbits must be $\pi$, and the orbit space must be a flat square. Under our assumptions, there can be only three exceptional orbits. Connect them with geodesics to create two triangles. At least one of these triangles encloses a point of positive curvature and, therefore, must have an angle sum exceeding $\pi$. Since all angle sums for a triangle must be at least $\pi$, the average sum is greater than $\pi$ and, as in the section on positive curvature, the orbit space can be given a metric of constant positive curvature.\qedhere
\end{proof}

\noindent It follows that, if the orbit space cannot be given a metric of positive curvature, it must be a planar triangle or a square. There are only three such triangles, $(\pi/2,\pi/4,\pi/4)$, $(\pi/2,\pi/3,\pi/6)$ and $(\pi/3,\pi/3,\pi/3)$.

\subsection{Tabular Listing}

The previous results are summarized in tables~\ref{Ta:PosCurv} and~\ref{Ta:ZeroCurv}. The exterior and interior angles are listed for each orbit space. In case 8, where there is one boundary arc but no angle (a circle), the angle is entered as $\pi$ to indicate that there is a boundary. When the orbit space can be represented as a spherical triangle, it is interesting to determine whether this triangle can be used to produce a tiling of the whole sphere. This question will be studied in the next section. The second table gives a similar classification of orbit spaces that can tile the plane. In this case, the number of tiles will be infinite.

We continue to use the convention that Greek letters denote boundary angles in the set $\{\pi/2, \pi/3, \pi/4, \pi/6\}$, while latin letters represent integers $\ge 2$.

\begin{table} [!h]
	\begin{center}
		\begin{tabular}{| c | c | c | c | c | c |}
 			\hline
			\# & Ext $\angle$'s & Int $\angle$'s & Tiling? & Tile & No. Tiles\\ \hline
			1 &  &  & Yes & $S^2$ & 1\\ \hline
			2 &  & $2\pi/p$ & No &  & \\ \hline
			3 &  & $2\pi/p$, $2\pi/q$ & No & & \\ \hline
			4 &  & $\pi$, $\pi$, $2\pi/p$ & Yes & $(\pi/2,\pi/2,\pi/p)$ & $4p$\\ \hline
			5 &  & $\pi$, $2\pi/3$, $2\pi/3$ & Yes & $(\pi/2,\pi/3,\pi/3)$ & 24\\ \hline
			6 &  & $\pi$, $2\pi/3$, $\pi/2$ & Yes & $(\pi/2,\pi/3,\pi/4)$ & 48\\ \hline
			7 &  & $\pi$, $2\pi/3$, $2\pi/5$ & Yes & $(\pi/2,\pi/3,\pi/5)$ & 120\\ \hline
			8 & $\pi$ &  & Yes & Hemisphere & 2\\ \hline
			9 & $\alpha$ &  & No &  & \\ \hline
			10 & $\alpha$, $\beta$ &  & No &  & \\ \hline
			11 & $\pi/2$, $\pi/2$, $\alpha$ &  & Yes & $(\pi/2,\pi/2,\alpha)$ & $4\pi/\alpha$\\ \hline
			12 & $\pi/2$, $\pi/3$, $\pi/3$ &  & Yes & $(\pi/2,\pi/3,\pi/3)$ & 24\\ \hline
			13 & $\pi/2$, $\pi/3$, $\pi/4$ &  & Yes & $(\pi/2,\pi/3,\pi/4)$ & 48\\ \hline
		\end{tabular}
		\caption{Orbit Spaces Admitting Positive Curvature}\label{Ta:PosCurv}
	\end{center} 
\end{table}

\begin{table} [!h]
	\begin{center}
		\begin{tabular}{| c | c | c | c | c | c |}
 			\hline
			\# & Ext $\angle$'s & Int $\angle$'s & Tiling? & Tile\\ \hline
			14 & $\pi/2$, $\pi/3$, $\pi/6$ & & Yes &$(\pi/2,\pi/3,\pi/6)$\\ \hline
			15 &  $\pi/2$, $\pi/4$, $\pi/4$ & & Yes &$(\pi/2,\pi/4,\pi/4)$\\ \hline
			16 &  $\pi/3$, $\pi/3$, $\pi/3$ & & Yes &$(\pi/3,\pi/3,\pi/3)$\\ \hline
			17 &  $\pi/2$, $\pi/2$, $\pi/2$, $\pi/2$ & & Yes &$(\pi/2,\pi/2,\pi/2,\pi/2)$\\ \hline
		\end{tabular}
		\caption{Orbit Spaces Admitting Zero Curvature}\label{Ta:ZeroCurv}
	\end{center} 
\end{table}

\subsection{Spherical and Planar Tilings}
In this section we shall see that many of the orbit spaces, those that support constant curvature, can be made to tile either the sphere or the plane. We have seen that, except for the trivial cases of the sphere and hemisphere, all of the orbit spaces $M/G$ supporting constant positive curvature can be realized as spherical triangles, or as spaces obtained by gluing along some or all of the edges of such triangles. These are the cases 1, 4-8 and 11-13 in table~\ref{Ta:PosCurv}. We will show that each triangle generates a tiling of the sphere. The orbit spaces 14-17 obviously generate a tiling of the plane.

\noindent \textbf{Cases 1, 8:}  (Sphere and hemisphere): These cases clearly tile the sphere.

\noindent \textbf{Cases 4, 11:} ($\pi/2,\pi/2,\theta$): In these cases $\theta$ divides $2\pi$ evenly with quotient $k$. Therefore, $2k$ triangles will tile the whole sphere.

\noindent \textbf{Cases 5, 12:} ($\pi/2,\pi/3,\pi/3$): Reflect over an edge joining angles $\pi/2$ and $\pi/3$. This produces a triangle with angles $2\pi/3$, $\pi/3$ and $\pi/3$. Reflecting twice around the angle $2\pi/3$ produces a triangle with all three angles equal to $2\pi/3$. This is a fundamental region for the action of the tetrahedral group on $S^2$, and so gives a tiling of the sphere with 24 tiles.

\noindent \textbf{Cases 6, 13:} ($\pi/2,\pi/3,\pi/4$): Reflecting the triangle over the edge connecting the angles $\pi/2$ and $\pi/4$ produces the triangle of case 5. The tiling consists of 48 tiles.

\noindent \textbf{Case 7:} ($\pi/2,\pi/3,\pi/5$): Reflect around the angle $\pi/5$ ten times to remove this angle. When a reflection involves the angle $\pi/2$, the vertex is removed. There remain five vertices, each with angle $2\pi/3$. This pentagon is a fundamental region for the action of the dodecahedral group on the 2-sphere, and the triangle tiles the sphere with 120 tiles.


\section{Main Theorem}

We shall now prove our main theorem. We restate the theorem for the convenience of the reader.

\begin{MAINthm}
Let $M$ be a compact, connected and simply-connected Riemannian manifold having nonnegative sectional curvature. Let $G$ be a compact, connected Lie group acting effectively on $M$ by isometries, with orbit space $M/G$. If $dim(M/G) = 2$, then $M$ is rationally elliptic. 
\end{MAINthm}

\noindent The proof will be accomplished in two parts. All of the orbit spaces that can be given a metric of constant nonnegative curvature can be used to tile either the sphere or the plane. We shall use these tilings, together with the Morse theory, to prove our theorem. For those orbit spaces which do not accept a metric of constant nonnegative curvature, we shall prove the theorem as an application of the Grove-Halperin theorem.

\subsection{Constant Curvature}\label{S:CONSTANT}
In all of the cases of constant curvature, the orbit spaces generate tilings of $\textbf{R}^2$ or $S^2$. By a general theorem of Mendes~\cite{rM}, the constant curvature metric on $M/G$ can be lifted to $M$. 

\begin{thm} (Mendes). Let $M$ be a polar $G$-manifold with section $\Sigma$, and $W(\Sigma)$ the generalized Weyl group associated to $\Sigma$. Let $\sigma \in C^{\infty}(Sym^2(T\ast \Sigma))^{W (\Sigma)}$ be a $W(\Sigma)$-equivariant symmetric two-tensor on the section $\Sigma$. Then there exists a $G$-equivariant symmetric two-tensor $\tilde{\sigma}$ on $M$ which restricts to $\sigma$. Moreover, if $\sigma$ is a metric, that is, positive definite at every point, then we can choose $\tilde{\sigma}$ on $M$ to be a metric with respect to which the action of $G$ remains polar with the same sections.
\end{thm}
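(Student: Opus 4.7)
The plan is to extend $\sigma$ from the section $\Sigma$ to all of $M$ by using the $G$-action, together with an auxiliary $G$-invariant background metric to handle the directions transverse to $\Sigma$. First I would fix an arbitrary $G$-invariant Riemannian metric $g_0$ on $M$, which exists by the standard averaging construction since $G$ is compact. At every principal point $q \in \Sigma$, one has the orthogonal decomposition $T_q M = T_q \Sigma \oplus V_q$, where $V_q = T_q(Gq)$, with orthogonality measured by $g_0$ (this is the polar condition). I would define $\tilde\sigma$ on $\Sigma$ by declaring it to equal $\sigma$ on $T_q\Sigma \times T_q\Sigma$, to equal $g_0$ on $V_q \times V_q$, and to be zero on the mixed term; then extend to all of $M$ by $G$-equivariance, setting $\tilde\sigma_{gq}(X,Y) = \tilde\sigma_q(g_*^{-1}X,g_*^{-1}Y)$.

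The first thing to check is that this prescription is well-defined. If $g_1 q_1 = g_2 q_2$ with $q_1,q_2 \in \Sigma$ principal, then $h := g_2^{-1}g_1$ carries $q_1 \in \Sigma$ to $q_2 \in \Sigma$; since through any principal point there is a \emph{unique} section (it is determined as the exponential image of the normal space to the orbit), $h\Sigma = \Sigma$, i.e.\ $h \in N(\Sigma)$, and so $h$ acts on $\Sigma$ as an element of $W(\Sigma)$. On the $T\Sigma$-piece the equality $(g_1)_*\sigma_{q_1} = (g_2)_*\sigma_{q_2}$ is precisely the $W(\Sigma)$-equivariance hypothesis on $\sigma$, while on the $V$-piece it is just $G$-invariance of $g_0$. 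Extension by continuity handles the non-principal locus on $\Sigma$.

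The main technical obstacle is smoothness of $\tilde\sigma$ across the singular orbits, where $V_q$ degenerates and several group elements may simultaneously carry $q_1$ to $q_2$. This is handled by working in an equivariant tubular neighborhood of each singular orbit via the slice theorem: the slice at a singular point $q$ is an open subset of a linear representation of $G_q$ containing $T_q\Sigma$, and one verifies that the $W(\Sigma)$-equivariance of $\sigma$ together with the $G_q$-invariance of $g_0$ assemble into a smooth $G_q$-invariant tensor on the slice, which then propagates smoothly to the tube by $G$-equivariance.

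Finally, if $\sigma$ is positive definite, then $\tilde\sigma|_\Sigma$ is positive definite since it is an orthogonal sum of two positive definite forms, and positive definiteness is preserved under $G$-translation; hence $\tilde\sigma$ is a Riemannian metric on $M$ with $G$ acting by isometries. By construction, at every point of $\Sigma$ the tangent space of $\Sigma$ is $\tilde\sigma$-orthogonal to the orbit, and $\Sigma$ still meets every $G$-orbit, so $\Sigma$ remains a section of the $G$-action in the new metric. This gives the polar conclusion with the same sections.
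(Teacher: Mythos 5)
The paper does not actually prove this statement; it is quoted verbatim from Mendes' dissertation~\cite{rM} and used as a black box, so there is no in-paper argument for me to compare yours against. I will therefore assess the proposal on its own.

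Your overall plan is the right one and matches the shape of Mendes' argument: define $\tilde\sigma$ along $\Sigma$ by combining $\sigma$ in the section directions with a fixed $G$-invariant tensor in the orbit directions, then spread it over $M$ by the group action, and verify that $W(\Sigma)$-equivariance of $\sigma$ together with $G$-invariance of the background data make the prescription unambiguous. Your well-definedness check via uniqueness of the section through a principal point and the resulting inclusion $h \in N(\Sigma)$ is correct. One small slip: you should take $g_0$ to be the \emph{given} polar metric on $M$ (which is already $G$-invariant), not an arbitrary $G$-invariant metric; for a generic $G$-invariant $g_0$ the splitting $T_qM = T_q\Sigma \oplus V_q$ is a direct sum but is not $g_0$-orthogonal, so the parenthetical ``this is the polar condition'' is only true for the original metric. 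The positive-definiteness and the persistence-of-polarity conclusion are otherwise argued correctly.

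The genuine gap is the smoothness of $\tilde\sigma$ across non-principal orbits, which you dispatch with ``one verifies that \dots\ assemble into a smooth $G_q$-invariant tensor on the slice.'' That sentence is where the entire content of the theorem lives, and it is not a routine verification. Two concrete difficulties are hidden there. First, at a singular point $q \in \Sigma$ the subspace $T_q\Sigma \oplus T_q(Gq)$ is a \emph{proper} subspace of $T_qM$ (the orbit has dropped dimension while $\Sigma$ has not), so your formula does not even define $\tilde\sigma_q$ on all of $T_qM$; you must take a limit from nearby principal points and show that this limit exists and varies smoothly. Second, even granting the reduction to a slice representation, the assertion that a $W$-equivariant smooth tensor on a section of a polar representation extends to a smooth $G_q$-equivariant tensor on the whole representation is a deep fact of the same nature as Schwarz's theorem on smooth invariants of compact group actions (and its Michor/Mather-type extensions to differential forms and tensors); it is not a consequence of averaging or of the tubular neighborhood theorem. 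Mendes' proof is devoted precisely to establishing this extension-and-smoothness step by an inductive slice reduction, and without an argument for it the proposal proves only the easy part of the theorem (the extension over the open dense principal stratum).

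\end{document}
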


\noindent It is interesting to note that the new metric on $M$ has the constant curvature orbit space but may no longer have nonnegative curvature. However, this property will no longer be needed in the sequel, since its only use was to limit the geometry of the orbit space.

When $Q$ is a principal orbit of $M$, we have an inclusion $i: Q \rightarrow M$ and, as described in sec.~\ref{Ss:FIBRATION}, we can construct a fibration $Q_i \rightarrow M$ with homotopy fiber $F$ at a point $p$ in $M$. We will need to show that $F$ has polynomially bounded homology. If the orbit $Q$ is simply-connected, it will follow from the sequence of loop spaces described at the end of sec.~\ref{Ss:FIBRATION} and the Serre spectral sequence, that the same is true for $M$ since the homology groups $H_*(F;\textbf{Q}) \otimes H_*(\Omega Q;\textbf{Q})$ converge to $H_*(\Omega M;\textbf{Q})$. Even if the fundamental group of $Q$ is non-zero, but finite, we may still use this method by considering instead the universal covering space $\tilde{Q}$.

When $\pi_1(Q)$ is infinite, this method breaks down. Fortunately, we have the following workaround, which is due to Grove and Ziller. By representation theory, we can embed $G$ in $SU(N)$ for some $N$. Since $G$ acts on $M$, there is a bundle with fiber $M$ associated to the principal bundle $SU(N) \rightarrow SU(N)/G$ given by
\begin{center}$(SU(N) \times M)/G \rightarrow SU(N)/G$.\end{center}
Denote the total space of this bundle by $P$. Then the exact homotopy sequence of this bundle, tensored with the rationals, shows that $P$ is elliptic if and only if $M$ is elliptic. The space $P$ supports an $SU(N)$ action by multiplication on the first factor, also with orbit space $M/G$. Let $SU(N)/K$ be a principal orbit. The homotopy sequence of the principal bundle $SU(N) \rightarrow SU(N)/K$ ends with
\begin{center} ... $\rightarrow \pi_1(SU(N)) \rightarrow \pi_1(SU(N)/K) \rightarrow \pi_0(K) \rightarrow \pi_0(SU(N)) \rightarrow \pi_0(SU(N)/K)$.\end{center}
Since $\pi_1(SU(N))$ and $\pi_0(SU(N))$ are trivial, and since $K$ has a finite number of components, it follows that $\pi_1(SU(N)/K)$ is a finite group. For simplicity of exposition, we shall continue to work directly with the manifold $M$ and, by the previous remarks, we might as well assume that a principal orbit of $M$ is simply connected.

The homotopy fiber $F$ is the set of all paths $\gamma : [0,1] \rightarrow M$ with $\gamma(0) \in Q$ and $\gamma(1) = p$. It is convenient for our purposes to consider the paths as beginning at $p$ and ending on $Q$. Then $\gamma(0) = p$ and $\gamma(1) \in Q$. We shall use the Morse theory to compute the homology of $F$. The critical points of the energy functional are the horizontal geodesics beginning at the point $p$ and orthogonal to the orbit $Q$. Since $M/G$ tiles the plane or the sphere, we can view the orbit space as multiply embedded in $\mathbf{R}^2$ or $S^2$, and we may then regard each horizontal geodesic in M as a curve over a geodesic in that space. Choose one of the tiles to be the primary orbit space and consider the point $p$ to be in this tile. The critical geodesics correspond to geodesics in the tiled space that connect $p$ to a copy of $Q$. The methods we use are similar for positive and zero curvature, but the details are somewhat different, and so we shall discuss these cases separately. 

\subsubsection{Positive Curvature}
Let $Q$ be any principal orbit, and let $C$ denote the union of all great circles in $S^2$ that contain two or more copies of $Q$. Let $P$ in $M$ - $\pi^{-1}(C)$ be a principal orbit such that, for some $p \in P$, $p$ is not a focal point of $Q$ along any geodesic. Let $p'$ denote the projection of $P$ in $M/G$. In what follows we denote the antipodal point of $p'$ by $\overline{p'}$ and the number of tiles in the spherical tiling by $c$ (from the No. Tiles column in table~\ref{Ta:PosCurv}). All horizontal geodesics emanating from $p$ lie over great circles passing through $\overline{p'}$. Since the dimension of the horizontal subspace is 2, the contribution to the index of a critical geodesic at each crossing of the antipodal point must be 1.

Under our assumptions, there are exactly $2c$ critical geodesics that meet the orbit $Q$ only once ($c$ when the orbit space is the double of a spherical triangle). These correspond to the great circles on $S^2$ from $p'$ to a point equivalent to $Q$. We must count two critical geodesics for each copy of $Q$, one for each direction from $p$. Every other critical geodesic must begin as one of these. Choose one of these initial segments.  Suppose that, on the corresponding great circle, $\gamma$ is a geodesic having index $d$, thereby producing a cell of dimension $d$. We must consider the contribution of the index of the operator $A$ (sec.~\ref{Ss:MORSE}) associated to the second fundamental form of $Q$, which may be a negative number. Set $m$ equal to the smallest possible value of index($A$). If $m \ge 0$ then, after a single circuit, there will be no additional cells of dimension $d$, only a cell of higher dimension, since there is a nonzero contribution at the antipodal point. Therefore, no two extensions of the same initial segment have the same index. If $\beta_d$ is the $d$-dimensional Betti number of $F$, it follows that $\beta_d \le 2c$. If $m < 0$, we may require as many as $1-m$ complete circuits to obtain a larger value for the index of $\gamma$. Then $\beta_d \le 2(1-m)c$. Setting $\lambda = max(1,1-m)$ and summing the Betti numbers, we have $\sum_{i=0}^{n} \beta_i \le 2c\lambda(n+1)$ and $F$ has polynomially, in fact linearly, bounded homology. This proves the main theorem for positive curvature.

\subsubsection{Zero Curvature}
Let $Q$ be any principal orbit, and let $P$ be a principal orbit such that, for some $p \in P$, $p$ is not a focal point of $Q$ along any geodesic. Since there is no antipodal point in zero curvature, we must look elsewhere for focal points. A focal point will occur when a critical geodesic crosses an orbit that is more singular than a principal orbit. More precisely, we have the following lemma.

\begin{lem}\label{T:CROSSING}
Let $Q$ be a principal orbit with isotropy group $H$, and let $S$ be any other orbit with isotropy group $K \supseteq H$. Suppose that $\gamma(t)$ is a horizontal geodesic joining the points $s \in S$ and $q \in Q$. Define $\gamma_k(t) = k\gamma(t)$ for $k \in K$. Then $\gamma_k(t)$ is a horizontal geodesic and $\gamma_k(t) \ne \gamma(t)$ if $k \notin H$. The set \{$\gamma_k$\} is parametrized by the space $K/H$, and $s$ is a focal point of $Q$ of multiplicity (at least) dim$(K/H)$. In particular, if $S$ is a singular orbit, dim$(K/H) \ge 1$.
\begin{proof}
Clearly, $\gamma_k(s) = s$ and $\gamma_k(q) \in Kq$. Furthermore, $kq = q$ if and only if $k \in H$. Since $K$ acts by isometries, the geodesics $\gamma_k(t)$ are horizontal.\qedhere
\end{proof}
\end{lem}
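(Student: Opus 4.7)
The plan is to use the $K$-translates of $\gamma$ to manufacture a $\dim(K/H)$-parameter family of $Q$-Jacobi fields along $\gamma$ that all vanish at $s$. Following the convention in Section~\ref{Ss:MORSE}, I parametrize $\gamma : [0,T] \to M$ so that $\gamma(0) = q$ and $\gamma(T) = s$, and choose the representative $q$ of the principal orbit so that $G_q = H$, making the inclusion $H \subseteq K$ hold pointwise. The first step is to verify that $\gamma_k := k \cdot \gamma$ is a horizontal geodesic for each $k \in K$: since $k$ acts on $M$ by isometry it carries geodesics to geodesics, and since its differential permutes $G$-orbits it preserves both the vertical distribution (orbit tangents) and its orthogonal complement, the horizontal distribution. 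The endpoints are $\gamma_k(0) = kq \in Kq \subseteq Q$ and $\gamma_k(T) = ks = s$, since $K = G_s$ fixes $s$.

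Next I would verify that $\gamma_k = \gamma_{k'}$ iff $k^{-1}k' \in H$. The ``only if'' direction is immediate from $kq = k'q$, which forces $k^{-1}k' \in G_q = H$. For ``if'', recall that the principal isotropy $H$ acts trivially on the slice at $q$ (a standard consequence of the slice theorem: any nearby isotropy is subconjugate to $H$ and, being itself principal, must equal $H$, so $H$ fixes every vector in the slice). The horizontal vector $\gamma'(0)$ lies in the slice, so any $h \in H$ satisfies $hq = q$ and $h_{\ast}\gamma'(0) = \gamma'(0)$; uniqueness of geodesics then gives $h \cdot \gamma = \gamma$. Hence $\{\gamma_k\}$ is faithfully parametrized by $K/H$.

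To extract the focal multiplicity, I consider for each $X \in \mathfrak{k}$ the smooth variation $\epsilon \mapsto \exp(\epsilon X) \cdot \gamma$ of $\gamma$ through horizontal geodesics, with variation field $J_X(t) = X^{\ast}_{\gamma(t)}$, the restriction of the fundamental vector field to $\gamma$. Orthogonality $J_X \perp \gamma'$ is automatic because $J_X(t)$ is tangent to the $G$-orbit through $\gamma(t)$, which is vertical, while $\gamma'$ is horizontal. Because each $\gamma_{\exp(\epsilon X)}$ begins on $Q$ perpendicularly to $Q$, differentiating those two conditions in $\epsilon$ yields $J_X(0) \in T_qQ$ and $J_X'(0) - S_0 J_X(0) \perp T_qQ$, so $J_X$ is a $Q$-Jacobi field in the sense of Section~\ref{Ss:MORSE}. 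Since $\gamma_{\exp(\epsilon X)}(T) = s$ for every $\epsilon$, we have $J_X(T) = 0$. The linear map $X \mapsto J_X$ has kernel exactly $\mathfrak{h}$: if $J_X \equiv 0$ then $J_X(0) = X^{\ast}_q = 0$ gives $X \in \mathfrak{g}_q = \mathfrak{h}$, and conversely $X \in \mathfrak{h}$ implies $\exp(\epsilon X) \cdot \gamma \equiv \gamma$ by the previous paragraph. Therefore the space of $Q$-Jacobi fields vanishing at $s$ has dimension at least $\dim(K/H)$, the required focal multiplicity. The final clause is pure dimension counting: if $S$ is singular, then $\dim G/K < \dim G/H$ forces $\dim(K/H) \geq 1$.

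The step I expect to require the most care is the boundary condition $J_X'(0) - S_0 J_X(0) \perp T_qQ$: this is the standard correspondence between variations of geodesics through $Q$-orthogonal geodesics and $Q$-Jacobi fields, but it requires unpacking the definition of the Weingarten operator $S_0$. Everything else rests on $G$-invariance and uniqueness of geodesics, which are routine.
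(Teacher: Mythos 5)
Your proof is correct and follows the same basic idea as the paper's: use the $K$-translates $k\cdot\gamma$ to produce a $\dim(K/H)$-parameter family of horizontal geodesics from $Q$ to $s$, hence a corresponding space of $Q$-Jacobi fields vanishing at $s$. The paper's own proof is a three-line sketch that only records the endpoint behavior of the $\gamma_k$ and their horizontality; you supply the two details it leaves implicit — that $H$ acts trivially on the slice (so the family genuinely factors through $K/H$, with kernel exactly $\mathfrak h$), and the explicit passage from the variation $\epsilon\mapsto\exp(\epsilon X)\cdot\gamma$ to the $Q$-Jacobi field $J_X = X^{\ast}\!\mid_{\gamma}$ with $J_X(T)=0$. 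These are exactly the right details, so this is the same argument carried out carefully rather than a different one.
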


In all cases where the orbit space tiles the plane, the orbit space is either an equilateral triangle, a square, or a triangle with a reflection that gives one of these two. Doubling the equilateral triangle produces a rhombus. We shall consider, therefore, only the case of tilings by rhombi where each rhombus contains at most four copies of $Q$. We shall consider rings of rhombi where ring 0 is the rhombus that contains the image of $p$ and ring $n$ consists of all congruent rhombi surrounding ring $n-1$.

Again, we must allow for the contribution of the index of the operator $A$ (sec.~\ref{Ss:MORSE}) associated to the second fundamental form of $Q$. If we set $m$ equal to its smallest possible value, we shall require at most $n-m$ tile crossings to guarantee that all critical geodesics producing cells of dimension less than or equal to $n$ have been obtained. It follows that all horizontal geodesics from $p$ to $Q$ with an index less than or equal to $n$ are contained in rings $0$ through $n-m$. Since there are $(2(n-m)+1)^2$ tiles in rings $0$ through $n-m$ and at most four copies of $Q$ in each tile, it follows that
\begin{center}$\sum_{i=0}^{n} \beta_i \le 4(2(n-m)+1)^2$.\end{center}
 This quadratic bound proves the main theorem for the case of zero curvature and completes the proof of the main theorem for constant curvature.

\subsection{Non-Constant Curvature}
Our approach in the cases which do not admit tilings is the following. We judiciously choose two points $a$ and $b$, typically the most singular orbits in $M/G$, and a simple smooth arc $\gamma$ such that $M/G - |\gamma|$ consists of two components, $C_a$ and $C_b$, containing $a$ and $b$, respectively. Here $|\gamma|$ denotes the support of $\gamma$. Set 
\begin{center}
$A = \pi^{-1}(a)$, $B = \pi^{-1}(b)$, $E = \pi^{-1}(|\gamma|)$

$D(A) = \pi^{-1}(C_a)$, $D(B) = \pi^{-1}(C_b)$, 
\end{center}
where $\pi : M \rightarrow M/G$ is the projection of $M$ onto the orbit space $M/G$. Clearly $E$ is the common boundary of $D(A)$ and $D(B)$. We must choose $\gamma$ so that $E$ is a $C^\infty$-submanifold of $M$. Since the action of $G$ restricted to any of $A$, $B$ and $E$ has at most cohomogeneity one, we know by the Grove-Halperin theorem that each is elliptic. To finish the proof, we need to show that $D(A)$ and $D(B)$ are disk bundles over $A$ and $B$, respectively. Then, by the method of proof of that theorem, it will follow that $M$ is elliptic.

By the Slice Theorem, the normal bundles of $A$ and $B$ can be expressed as equivariant tubular neighborhoods $N_A(\rho_A)$ and $N_B(\rho_B)$. The fibers are open disks of common radii $\rho_A$ and $\rho_B$, respectively. If necessary, we shall reduce the radii of the normal bundles so that their closures have disjoint neighborhoods, and continue to use $\rho_A$ and $\rho_B$ to denote them. To avoid repetition, we will concentrate on the orbit $A$ in what follows. Of course, similar remarks will hold for $B$ as well. We denote the closure of a set $S$ by $\overline{S}$ and the interior by $S^o$ in what follows.

Our immediate task is to extend the bundle $N_A(\rho_A/2)$  to a larger disk bundle $M$ - $\overline{N_B(\rho_B/2)}$  over $A$. We begin by defining a vector field $R$ on $\pi(N_A(\rho_A))$  which is a radial vector field along the geodesics emanating from $a$. Set
\begin{center}$T = M/G$ - $\pi(N_{A}(\rho_A/2))$ $\cup$ $\pi(N_{B}(\rho_B/2))$\end{center}
where $\pi(N_{A}(\rho_A/2))$ and $\pi(N_{B}(\rho_B/2))$ are the disks of radius $\rho_A/2$ and $\rho_B/2$ centered at $a$ and $b$, respectively. Now $T$ is diffeomorphic to either $S^1 \times I$ or to $I \times I$. This diffeomorphism defines a $C^\infty$ family of $C^\infty$ curves $\gamma_s(t)$, parametrized by the first coordinate. Of course any such diffeomorphism takes the boundary of $I$ into the boundary of $M/G$, when $M/G$ has a boundary. Choose the diffeomorphism so that the radial curves on the annulus defined by the disks of radius $\rho_A/2$ and $\rho_A$ agree in some manner with the curves $\gamma_s(t)$, say for $0 \le t \le 1/4$, and do the same for the corresponding annulus around $b$ on the interval $3/4 \le t \le 1$. Define a unit vector field $Y$ on $T$ in such a way that $Y_q$ is the unit tangent vector to the curve $\gamma_s$ through $q$ in the direction of $b$. Choosing a partition of unity $\{\rho_1,\rho_2\}$ subordinate to the open sets $\pi(N_{A}(\rho_A/2))$ and $T^o$, we obtain a vector field $X_p = \rho_1(p)R_p + \rho_2(p)Y_p$ on $M/G$ - $\overline{\pi(N_{B}(\rho_B/2))}$ which is non-singular, except at $a$.  The vector field $X$ has a unique $C^\infty$ horizontal lift $Z$, even when $\partial M/G \ne \varnothing$ since, on the boundary, the lift of the vector is parallel to the orbit stratum. The integral curves of $Z$ fill out the space $M$ - $\overline{N_{B}(\rho_B/2)}$. It follows that  $M$ - $\overline{N_{B}(\rho_B/2)}$ is a disk bundle over $A$. The restriction of this bundle to $\pi^{-1}(C_a)$ is the disk bundle $D(A)$. We obtain the disk bundle $D(B)$ by reversing the direction of the tangent vectors on $T$.
 
To finish the proof, we need only specify the orbits $a$, $b$, and the point set $|\gamma|$ for each case. In case 2, take $a$ to be the exceptional orbit and let $b$ be any other point. For case 3 choose $a$ and $b$ to be the two exceptional orbits. For case 9, take $a$ to be the vertex point and let $b$ be any other point on the boundary. Finally, for case 10, choose $a$ and $b$ to be the two vertices.

It remains to specify the curve $\gamma$. For cases 2 and 3, it suffices to set $\gamma(s) = \gamma_s(1/2)$. For cases 9 and 10, choose the points $\gamma_0(1/2)$ and $\gamma_1(1/2)$ on the boundary of $T$. Let $p$ be a point of the orbit $P = \pi^{-1}(\gamma_0(1/2))$ and consider the normal space $N_p$ to the orbit $P$ at $p$. Then, by the Slice Theorem, there is a disk $D$ centered at 0 in $N_p$ such that $G \times_{G_{p}}D_{p}^{\perp}$ is equivariantly diffeomorphic to a sufficiently small tubular neighborhood of $P$. Since $P$ lies on an orbit stratum of codimension 1 in $M/G$, the image of this bundle is a smooth arc $\alpha_0$ in $M/G$. Since $\alpha_0$ is perpendicular to the orbit stratum it follows that, for sufficiently small $\epsilon_0 > 0$, $\alpha_0(t)$ is transversal to the curves $\gamma_s$ for $t$ in the interval $[0,\epsilon_0)$. Set $\epsilon'_0 = min(\epsilon_0,s/4)$. A similar result holds for the point $\gamma_1(1/2)$, giving an arc $\alpha_1(t)$ which is transversal in $[0,\epsilon'_1)$. Let $\alpha$ be any $C^\infty$ curve, smoothly connecting $\alpha_0(\epsilon'_0/2)$ and $\alpha_1(\epsilon'_1/2)$, crossing each $\gamma_s(t)$ transversally, and lying entirely in the interval $1/4 <  t < 3/4$. The resulting curve satisfies all of our requirements. This completes the proof for these four cases.

\newpage
\bibliographystyle{amsplain}

\end{document}